\DeclarePairedDelimiter\abs{\lvert}{\rvert}%
\DeclareMathOperator{\rank}{rank}
\DeclareMathOperator{\Minors}{Minors}
\DeclareMathOperator{\K}{\\K}
\DeclareMathOperator{\re}{Re}
\DeclareMathOperator{\im}{Im}
\DeclareMathOperator{\CC}{\mathbb{C}}
\DeclareMathOperator{\RR}{\mathbb{R}}
\DeclareMathOperator{\avec}{\mathbf{a}}
\DeclareMathOperator{\bvec}{\mathbf{b}}
\DeclareMathOperator{\cvec}{\mathbf{c}}
\DeclareMathOperator{\vvec}{\mathbf{v}}
\DeclareMathOperator{\x}{\mathbf{x}}
\theoremstyle{plain}
	\newtheorem{theorem}{Theorem}[section]
	\newtheorem{proposition}[theorem]{Proposition}
	\newtheorem{corollary}[theorem]{Corollary}
\newtheorem{problem}[theorem]{Problem}
        \theoremstyle{definition}
	\newtheorem{definition}[theorem]{Definition}
	\newtheorem{example}[theorem]{Example}
	\newtheorem*{observation}{Observation}
        \newtheorem*{remark}{Remark}
\begin{document}
\title{Complex psd-minimal polytopes in dimensions two and three}

        \author[Tristram Bogart, Jo\~{a}o Gouveia, and Juan Camilo Torres]{Tristram Bogart$^1$, Jo\~{a}o Gouveia$^2$, and Juan Camilo Torres$^1$}
\date{}
\address{$^1$ Departamento de Matem\'{a}ticas, Universidad de los Andes, Bogot\'{a}, Colombia}
\address{$^2$ University of Coimbra, CMUC, Department of Mathematics, Portugal}
\thanks{Emails: \MakeLowercase{tc.bogart22}@uniandes.edu.co, jgouveia@mat.uc.pt, jc.torresc@uniandes.edu.co}
\thanks{The first and third authors were supported by internal research grants (INV-2020-105-2076 and INV-2018-48-1373, respectively) from the Faculty of Sciences of the Universidad de los Andes. The third author was also supported in his doctoral studies, of which this project forms a part, by the Colombian Government through  Minciencias. The second author was supported by the Centre for Mathematics of the University of Coimbra, grant UIDB/00324/2020, funded by the Portuguese Government through FCT/MCTES.}

         \begin{abstract}
           The extension complexity of a polytope measures its amenability to succinct representations via lifts. There are several versions of extension complexity, including linear, real semidefinite, and complex semidefinite. We focus on the last of these, for which the least is known, and in particular on understanding which polytopes are complex psd-minimal. We prove the existence of an obstruction to complex psd-minimality which is efficiently computable via lattice membership problems. Using this tool, we complete the classification of complex psd-minimal polygons (geometrically as well as combinatorially). In dimension three we exhibit several new examples of complex psd-minimal polytopes and apply our obstruction to rule out many others.
        \end{abstract}

        \maketitle

\section{Introduction}

The extension complexity of a polytope $P$ embedded in $\RR^d$ measures its amenability to succinct representations via lifts. Extension complexity has become popular in recent years because it encodes the complexity  of certain approaches to combinatorial optimization problems and its links to matrix theory have allowed bounds to be proved on them \cite{yannakakis1991expressing,GPT13,fiorini2015exponential,lee-raghavendra-steurer,rothvoss2017}.
Moreover, extension complexity has interesting connections to information theory \cite{braverman2013information,braun2016common,fawzi2015positive}. For instance, extension complexity coincides with the lowest complexity of a random communication protocol that allows two people, independently given a point in the polytope and a valid inequality for the polytope respectively, to jointly evaluate the inequality at that point on average. Different types of extension complexity correspond to different flavours of communication protocols.

There are different definitions of extension complexity depending on what type of lifts are allowed. The most studied version is the \emph{linear extension complexity} which can be defined as the minimum number of facets of a polytope $Q$ that linearly projects onto $P$. Another common version is the \emph{semidefinite extension complexity} which is the smallest number $k$ for which there exists a spectrahedron defined by means of $k \times k$ matrices that linearly projects onto $P$. That is, we ask that $P$ be the linear image of a set of the type
\begin{equation} \label{eq:spectrahedron}
\{x \in \mathbb{R}^n \, : \,  A_0 + A_1 x_1 + \cdots + A_n x_n \succeq 0\}
\end{equation}
where the $A_i$ are real $k \times k$ symmetric matrices.

In this paper we will focus on the \emph{complex semidefinite extension complexity} of a polytope. This is a simple variation of the semidefinite extension complexity in which we again seek to minimize $k$ such that $P$ is the linear image of a set as in (\ref{eq:spectrahedron}), but where we now allow the $A_i$ to be Hermitian matrices rather than only symmetric real matrices. This complex version is, in some ways, more natural for quantum communication complexity (see \cite{LWdW17}).

Our particular focus of interest will be polytopes whose complex semidefinite extension complexity is as small as possible. The minimum possible values of the real and complex semidefinite extension complexities of a $d$-dimensional polytope $P$ are both $d+1$. Accordingly, we call $P$ \emph{psd-minimal} (respectively \emph{complex psd-minimal} if the real (respectively complex) semidefinite extension complexity is equal to $d+1$.
Complex psd-minimal polytopes are an immediate generalization of real psd-minimal polytopes, which are themselves a generalization of $2$-level polytopes (\cite{ACF18}), both of which are interesting classes of low-complexity polytopes. Real psd-minimality was studied in \cite{GRT13,GPRT17} and a full classification was obtained up to dimension four. There are two combinatorial classes of psd-minimal polytopes in dimension two (triangles and quadrilaterals), six in dimension three and $32$ in dimension four. In higher dimension very little is known, except that any $d$-polytope with at most $d+2$ vertices or facets is psd-minimal. The complex case is even more open, but the only existing results, from \cite{GGS17}, hint at significant differences from the real case: while no pentagon in $\RR^2$ is complex psd-minimal, the regular hexagon is known to be so.

In this paper, we introduce a new tool based on lattice membership problems to identify obstructions to complex psd-minimality. Using this tool, we characterize which hexagons are complex psd-minimal and show that these hexagons, together with triangles and quadrilaterals, are the only complex psd-minimal polygons. We then proceed to make inroads on the problem of classifying complex psd-minimal 3-polytopes. In particular, our precise classification of complex psd-minimal hexagons allows us to reduce this problem to the study of the class of \emph{doubly 3/4 polytopes}: those for which each vertex and each facet has degree at most four. We also construct a series of new examples of complex psd-minimal 3-polytopes.

\section{Complex psd-minimality through slack matrices} \label{sec:background}
One of the reasons to focus on complex psd minimal polytopes is that while in general checking the real or complex semidefinite extension complexity of a polytope is very hard, minimality can be characterized by a simple algebraic condition. Extension complexity is intimately connected to the properties of a special matrix associated to a polytope: its slack matrix.

\begin{definition} 
  Let $P\subseteq\mathbb{R}^d$ be a full-dimensional polytope with vertices $\mathbf{v}_1,\ldots,\mathbf{v}_n$ and facets $F_1,\ldots,F_m$. Then $P=\lbrace \mathbf{t}\in\mathbb{R}^d: A\mathbf{t}+\mathbf{b}\geq\mathbf{0}\rbrace$ for some $A=[a_{ij}]_{m\times d}\in\mathbb{R}^{m\times d}$, $\mathbf{b}\in\mathbb{R}^m$, and such that $F_i=\lbrace \mathbf{t}\in P:a_{i1}t_1+\cdots+a_{id}t_d +b_i=0\rbrace$ for all $i=1,\ldots,m$. If $h_i(t):=a_{i1}t_1+\cdots+a_{id}t_d +b_i$ for $i=1,\ldots,m$, then the matrix defined as $[h_i(\mathbf{v}_j)]_{m\times n}$ is called a \emph{slack matrix} of $P$. If we take a slack matrix of $P$ and replace each non-zero entry with a distinct variable, we obtain the \emph{symbolic slack matrix} of $P$.
\end{definition}

Extension complexities of a polytope $P$ are all equivalent to certain factorization ranks of its slack matrix (see \cite{yannakakis1991expressing,GPT13,fawzi2015positive}). These are still notoriously hard quantities to compute, but in the case of minimal complex psd-minimality one has a more concrete criterion.

\begin{proposition}
A $d$-polytope $P$ with slack matrix $S$ is complex psd-minimal if and only if one can find a rank $d+1$ complex matrix $A$ such that each entry of $S$ is the square of the absolute value of the corresponding entry of $A$.
\end{proposition}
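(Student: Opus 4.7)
The plan is to invoke the complex analogue of Yannakakis' theorem, which identifies the complex semidefinite extension complexity of $P$ with the complex psd-rank of its slack matrix $S$: the smallest $k$ for which $S_{ij}=\mathrm{tr}(A_iB_j)$ for some Hermitian positive semidefinite matrices $A_i,B_j\in\CC^{k\times k}$. Since $\rank(S)=d+1$ for a $d$-polytope and this is a lower bound on the complex psd-rank, complex psd-minimality amounts to exhibiting such a factorization of size exactly $d+1$. The task therefore reduces to relating these minimum-size Hermitian PSD factorizations of $S$ to rank-$(d+1)$ complex matrices $A$ with $|A_{ij}|^2=S_{ij}$.

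For the ``if'' direction I would begin from a rank $d+1$ matrix $A\in\CC^{m\times n}$ with $|A_{ij}|^2=S_{ij}$, factor it as $A=UV^*$ with $U\in\CC^{m\times(d+1)}$ and $V\in\CC^{n\times(d+1)}$, and let $u_i$ and $v_j$ denote the rows of $U$ and $V$. Setting $A_i:=u_i^*u_i$ and $B_j:=v_j^*v_j$ produces rank-one Hermitian PSD matrices of size $d+1$ satisfying
\[
\mathrm{tr}(A_iB_j)=|u_iv_j^*|^2=|A_{ij}|^2=S_{ij},
\]
yielding a complex PSD lift of size $d+1$ and hence complex psd-minimality.

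The converse is the main content, and this is where I expect the work to lie. Given a Hermitian PSD factorization $S_{ij}=\mathrm{tr}(A_iB_j)$ of size $d+1$, the crucial claim is that the factor matrices may be taken to have rank one. I would approach this via an extreme point analysis: for each $j$, replace $B_j$ by an extreme point of the spectrahedron $\{B\succeq 0:\mathrm{tr}(A_iB)=S_{ij}\text{ for all }i\}$, and exploit the fact that the column space of $S$ has complex dimension exactly $d+1$ to show that any surviving rank above one would allow the PSD factorization to be compressed to size strictly less than $d+1$, contradicting minimality. The symmetric argument applied simultaneously to the $A_i$ forces rank-one factors on both sides. Establishing this rigidity at the minimum size is the main obstacle of the proof.

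Once rank-one factors are in hand, write $A_i=a_ia_i^*$ and $B_j=b_jb_j^*$ and set $A_{ij}:=a_i^*b_j$. Then $|A_{ij}|^2=\mathrm{tr}(A_iB_j)=S_{ij}$, and $A$ is expressed as the product of an $m\times(d+1)$ matrix with a $(d+1)\times n$ matrix, so $\rank(A)\le d+1$. Equality holds by the same minimality argument: if either $\{a_i\}$ or $\{b_j\}$ failed to span $\CC^{d+1}$, one could restrict to their span and produce a PSD factorization of smaller size, contradicting the assumption that $P$ is complex psd-minimal.
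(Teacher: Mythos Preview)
The paper does not prove this proposition; it simply cites \cite{GGS17} and notes that the argument is an immediate adaptation of the real case in \cite{GPT13}. So there is no proof in the paper to compare against, and your attempt must be judged on its own.

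Your ``if'' direction is correct and is exactly the standard construction: factoring $A=UV^*$ and taking rank-one Gram matrices $u_i^*u_i$ and $v_j^*v_j$ gives a Hermitian PSD factorization of size $d+1$.

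The ``only if'' direction, however, has a genuine gap. Replacing each $B_j$ by an extreme point of the fiber $\{B\succeq 0:\mathrm{tr}(A_iB)=S_{ij}\}$ does not force $\rank(B_j)=1$. Extreme points of a spectrahedron cut out by $r$ independent linear constraints in $\text{Herm}_{d+1}$ can have rank as large as $\lfloor\sqrt{r}\rfloor$; here the effective number of independent constraints is $d+1$ (the rank of $S$), so you only get $\rank(B_j)\le\sqrt{d+1}$, which is useless for $d\ge 3$. Your fallback---that a surviving factor of rank $\ge 2$ allows the whole factorization to be compressed below size $d+1$---is also not valid: compression requires \emph{all} the $B_j$ to have range contained in a common proper subspace of $\CC^{d+1}$, and a single high-rank $B_j$ does not produce that. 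So the rigidity you flag as ``the main obstacle'' is genuinely unresolved by the outline you give.

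The argument in \cite{GPT13,GRT13} (and its complex analogue) proceeds differently: one first passes from the factorization to an affine spectrahedral description $P=\{x:M(x)\succeq 0\}$ with $M$ taking values in $\text{Herm}_{d+1}$, so that the $B_j:=M(v_j)$ automatically lie on a $d$-dimensional affine slice. One then shows, using the face structure of $P$, that for each facet $F_i$ the matrices $M(x)$ with $x\in F_i$ share a common kernel vector $a_i$, yielding rank-one $A_i=a_ia_i^*$; the incidence relations at each vertex then force $\rank(M(v_j))=1$. This facial argument is what is missing from your sketch.
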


This criterion can be found in \cite{GGS17} and is an immediate adaptation of its real analogue, found in \cite{GPT13}. Note that verifying it remains a notoriously hard problem, since it corresponds to verifying membership on a certain amoeba of a rank variety. For a more general take on this problem, see \cite{GGphaseless}. However, the algebraic nature of this characterization will allow us to explicitly construct obstructions to complex psd-minimality in certain cases. To do that we will start by encoding the possible realizations of any polytope $P$ in a more algebraic way.

Slack matrices are a very useful way to encode polytopes. A deeper look into these matrices can be found in \cite{gouveia2013nonnegative, GMTW18} but we will now state the most important properties that we will use in this paper.
The first important property is that slack matrices, up to column and row scaling by positive scalars,  characterize polytopes up to projective equivalence.
\begin{proposition}
Given two polytopes $P$ and $Q$ with slack matrices $S_P$ and $S_Q$, there exist diagonal matrices $D_1$ and $D_2$ with positive diagonal entries such that $S_P=D_1 S_Q D_2$ if and only if there exists a projective transformation that sends $P$ to $Q$.
\end{proposition}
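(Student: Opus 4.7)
The plan is to exploit the canonical factorization of a slack matrix coming from its definition. For a $d$-polytope $P$ with vertices $\mathbf{v}_1,\ldots,\mathbf{v}_n$ and facet functionals $h_i(\mathbf{t})=\mathbf{a}_i^T\mathbf{t}+b_i$, define the homogenization matrices $V_P\in\mathbb{R}^{(d+1)\times n}$ with columns $(\mathbf{v}_j,1)^T$ and $H_P\in\mathbb{R}^{m\times(d+1)}$ with rows $(\mathbf{a}_i,b_i)$. Then $S_P=H_P V_P$, and this is a rank-$(d+1)$ factorization since $P$ is full-dimensional. Projective transformations of $\mathbb{R}^d$ correspond to invertible $(d+1)\times(d+1)$ matrices acting on these homogeneous coordinates, so the entire proof reduces to comparing rank-$(d+1)$ factorizations of $S_P$.

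For the direction \emph{projective equivalence implies slack scaling}, let $T$ be represented by $M\in GL_{d+1}(\mathbb{R})$. Sending vertices to vertices gives $M(\mathbf{v}_j^P,1)^T = c_j(\mathbf{v}_j^Q,1)^T$ with $c_j>0$ (the last coordinate must be positive because $T$ maps the affine chart containing $P$ to the affine chart containing $Q$). Dually, the pullback of each facet functional of $Q$ is proportional to the corresponding functional of $P$, and the constant of proportionality $d_i$ is positive because both functionals are nonnegative on $P$. Therefore
\[
[S_P]_{ij} \;=\; (\mathbf{a}_i^P,b_i^P)\cdot(\mathbf{v}_j^P,1) \;=\; \tfrac{1}{d_i}(\mathbf{a}_i^Q,b_i^Q)\,M(\mathbf{v}_j^P,1)^T \;=\; \tfrac{c_j}{d_i}[S_Q]_{ij},
\]
which gives the required $S_P=D_1 S_Q D_2$ with $D_1=\mathrm{diag}(1/d_i)$ and $D_2=\mathrm{diag}(c_j)$.

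For the converse, the equation $S_P=D_1 S_Q D_2=(D_1 H_Q)(V_Q D_2)$ exhibits two rank-$(d+1)$ factorizations of $S_P$, the other being $H_P V_P$. By uniqueness of rank-revealing factorizations up to an invertible change of basis, there exists $M\in GL_{d+1}(\mathbb{R})$ with $V_Q D_2 = M V_P$ and $D_1 H_Q = H_P M^{-1}$. The first equation reads $M(\mathbf{v}_j^P,1)^T=(D_2)_{jj}(\mathbf{v}_j^Q,1)^T$, so viewing $M$ as a projective transformation, each vertex of $P$ maps to the corresponding vertex of $Q$. The positivity of $(D_2)_{jj}$ ensures that for any convex combination $\mathbf{t}=\sum\lambda_j\mathbf{v}_j^P$ the last coordinate of $M(\mathbf{t},1)^T$ is $\sum\lambda_j(D_2)_{jj}>0$, so $M$ avoids the plane at infinity on $P$ and genuinely defines an affine-valued map on $P$; combined with the facet-preservation consequence of the second equation, this gives a projective transformation from $P$ to $Q$.

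The main subtlety to watch is maintaining positivity throughout: it is what distinguishes a projective equivalence between two polytopes from an arbitrary projective isomorphism of their complexified or real-linear spans that might fold or reflect the polytope. Consequently, whenever a scalar appears (the $c_j$, the $d_i$, or the diagonal entries of $D_1,D_2$), one must explicitly verify it is positive using the geometric interpretation (vertex lies above the plane at infinity, facet functional is nonnegative on the polytope, or hypothesized positivity of $D_1,D_2$). The rest is linear algebra.
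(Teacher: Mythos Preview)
The paper does not actually prove this proposition; it is stated as background with a reference to \cite{gouveia2013nonnegative, GMTW18} for details. Your argument is correct and is essentially the standard one found in those references: use the rank-$(d+1)$ factorization $S_P=H_PV_P$ through homogeneous coordinates, and invoke uniqueness of rank factorizations up to $GL_{d+1}(\mathbb{R})$ to produce the projective map in the converse direction. Your attention to the positivity of the scalars $c_j$, $d_i$, and $(D_2)_{jj}$ is exactly the point that makes the argument work for genuine polytopes rather than arbitrary matrices with the same support.
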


The second important property is a characterization of slack matrices. Given an abstract $d$-dimensional polytope $P$, we want to consider all of its possible realizations. If we are interested in the realizations only up to projective equivalence, a natural idea is to study the set of \emph{scaled slack matrices} of $P$, which we define as the set of matrices that can be obtained by scaling rows and columns of the slack matrix of any realization of $P$ by positive scalars. These matrices have a very simple characterization.

\begin{proposition}\label{prop:rank}
 If $S_P(x_1,\dots,x_m)$ is the symbolic slack matrix of a $d$-polytope $P$ and $\boldsymbol{\zeta}\in(\CC^\ast)^m$, then $\rank S_P(\boldsymbol{\zeta})\geq d+1$. Furthermore, a matrix $S$ is a scaled slack matrix of $P$ if and only if $\rank(S)=d+1$ and there exists $\boldsymbol{\alpha} \in \RR_{++}^m$ such that $S=S_P(\boldsymbol{\alpha})$.
\end{proposition}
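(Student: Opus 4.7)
The proposition splits into two claims: the rank lower bound, and the characterization of scaled slack matrices. The plan is to prove each in turn.

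For $\rank S_P(\boldsymbol{\zeta}) \geq d+1$ on $(\CC^\ast)^m$, my plan is to exhibit a $(d+1) \times (d+1)$ submatrix of the symbolic slack matrix whose determinant is a single monomial in $x_1, \ldots, x_m$, hence nonvanishing on $(\CC^\ast)^m$. Fix a complete flag of faces $G_0 \subsetneq G_1 \subsetneq \cdots \subsetneq G_d = P$ with $\dim G_i = i$ (such a flag always exists). Set $\mathbf{v}_0 := G_0$ and pick any facet $F_0$ not containing $\mathbf{v}_0$. For each $i = 1, \ldots, d$, pick a facet $F_i$ containing $G_{i-1}$ but not $G_i$---which exists because every face of $P$ is the intersection of the facets containing it, so if every facet through $G_{i-1}$ also contained $G_i$ we would have $G_{i-1} \supseteq G_i$, a contradiction---together with a vertex $\mathbf{v}_i$ of $G_i$ not lying on $F_i$ (which exists because $G_i \not\subseteq F_i$). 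Whenever $i < j$ one has $\mathbf{v}_i \in G_i \subseteq G_{j-1} \subseteq F_j$, so the corresponding entry is zero; and $\mathbf{v}_i \notin F_i$ by construction. The resulting submatrix is therefore upper triangular with nonzero diagonal, so its determinant is a single monomial in the variables, giving the lower bound.

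The forward direction of the characterization follows at once from the preceding projective-equivalence proposition: if $S = D_1 S_0 D_2$ for a slack matrix $S_0$ of some realization of $P$ and positive diagonals $D_1, D_2$, then positive diagonal scaling preserves both the rank (equal to $d+1$ via the factorization $S_0 = MN$ with $M \in \RR^{m \times (d+1)}$ encoding affine-functional coefficients and $N \in \RR^{(d+1) \times n}$ encoding homogeneous vertex coordinates) and the zero pattern, so $\rank S = d+1$ and $S = S_P(\boldsymbol{\alpha})$ for some $\boldsymbol{\alpha} \in \RR_{++}^m$.

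For the converse, suppose $\rank S = d+1$ and $S = S_P(\boldsymbol{\alpha})$ with $\boldsymbol{\alpha} > 0$. I would factor $S = UV$ with $U \in \RR^{m \times (d+1)}$ and $V \in \RR^{(d+1) \times n}$. The row vector $\mathbf{1}^T S$ has strictly positive entries, since every column of $S$ contains at least one positive entry (each vertex of $P$ lies off at least one facet), and $\mathbf{1}^T S = (\mathbf{1}^T U) V$ lies in the row span of $V$. Choose any invertible $M \in \RR^{(d+1) \times (d+1)}$ whose first row is $\mathbf{1}^T U$; then in the factorization $S = (UM^{-1})(MV)$ the first row of $MV$ is the positive vector $\mathbf{1}^T S$. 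Rescaling each column of $MV$ by its positive first entry identifies the remaining $d$ rows with coordinates $\bar{\mathbf{v}}_1, \ldots, \bar{\mathbf{v}}_n \in \RR^d$, and the rows of $UM^{-1}$ become coefficient vectors of affine functionals $h_i$ with $h_i(\bar{\mathbf{v}}_j)$ proportional to $S_{ij}$. The zero pattern inherited from $S_P(\boldsymbol{\alpha})$ forces each $h_i$ to vanish on precisely the correct vertex subset, and positivity of $\boldsymbol{\alpha}$ keeps the values nonnegative on all $\bar{\mathbf{v}}_j$. The main obstacle I anticipate is verifying that $\{\mathbf{t} \in \RR^d : h_i(\mathbf{t}) \geq 0 \text{ for all } i\}$ is genuinely a polytope combinatorially equivalent to $P$---with vertices exactly $\{\bar{\mathbf{v}}_j\}$ and facets defined by $\{h_i = 0\}$---rather than a lower-dimensional collapse; this step should be handled by using the rank condition together with the incidence data encoded in the zero pattern to rule out unwanted coincidences.
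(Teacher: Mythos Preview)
The paper does not actually prove this proposition: it is stated in Section~\ref{sec:background} as a known background result, with the surrounding discussion pointing to \cite{gouveia2013nonnegative, GMTW18} for the theory of slack matrices. So there is no ``paper's own proof'' to compare against; I will instead comment on the correctness of your argument.

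Your proof of the rank lower bound is correct and clean. The flag construction produces a $(d+1)\times(d+1)$ submatrix that is triangular with nonzero (variable) diagonal entries, so its determinant is a monomial and hence nonvanishing on $(\CC^\ast)^m$. The checks that the chosen vertices and facets are pairwise distinct go through exactly as you indicate.

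The forward direction of the characterization is also fine: positive diagonal scaling preserves rank and the zero pattern, and a genuine slack matrix of a $d$-polytope has rank exactly $d+1$.

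For the converse, your outline is the standard one, and you are honest that the last step is where the work lies. You are right to be cautious: showing that the affine configuration $\{\bar{\mathbf v}_j\}$ together with the halfspaces $\{h_i\ge 0\}$ actually realizes a polytope with the same face lattice as $P$ is the substantive part of the result, and it does not follow from rank and sign information alone without further argument. The cleanest route is not to reprove this from scratch but to invoke the characterization of slack matrices in \cite{gouveia2013nonnegative} (a nonnegative matrix is the slack matrix of some polytope if and only if it has rank one more than the dimension and the all-ones vector lies in its row span), combined with the fact that combinatorially distinct polytopes have distinct support patterns in their slack matrices. If you want a self-contained argument, you would need to show that no $h_i$ is redundant and no $\bar{\mathbf v}_j$ lies in the interior of a positive-dimensional face, which is exactly the content of those references.
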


This immediately suggests that we consider the ideal generated by all $(d+2)$-minors of the symbolic slack matrix. By the above discussion, the positive points in its variety are in one to one correspondence with scaled slack matrices of $P$. We will call this ideal the \emph{minor ideal} of $P$. We can enlarge the ideal further by saturation with respect to all of the variables. This corresponds to considering the ideal of all polynomials which after multiplication by some monomial belong to the minor ideal. We call this the \emph{slack ideal} of $P$, and its variety has the same positive part as the minor ideal. We can and will consider the slack ideal as extended to the Laurent polynomial ring.

\begin{observation}
  As is noted in \cite{GMTW18}, we can scale the rows and columns of the symbolic slack matrix of a polytope $P$ so that the entries indexed by the edges of a maximal spanning forest of the nonincidence graph $\mathbf{G}_P$ of vertices and facets are all equal to 1. The rank restriction of the slack matrix may fix some of the remaining entries, as in the following example which we will use later on.
  \end{observation}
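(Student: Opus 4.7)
The observation has two parts: (i) the row and column scalings can be chosen so that each entry of the symbolic slack matrix indexed by an edge of a fixed maximal spanning forest $T$ of $\mathbf{G}_P$ becomes $1$, and (ii) the rank constraint from Proposition~\ref{prop:rank} may further force some of the remaining entries. Part (ii) is really a motivational remark for the example that follows, so my plan is to prove part~(i) and comment only briefly on~(ii).

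For~(i), I would use a standard rooted-tree normalization. First I would note that $\mathbf{G}_P$ is bipartite on the disjoint node set $\{F_1,\dots,F_m\}\sqcup\{\vvec_1,\dots,\vvec_n\}$, with $\{F_i,\vvec_j\}$ an edge precisely when the $(i,j)$-entry of the symbolic slack matrix is a variable $x_{ij}$. Simultaneously scaling row $i$ by $\lambda_i>0$ and column $j$ by $\mu_j>0$ sends $x_{ij}$ to $\lambda_i\mu_j x_{ij}$, so the admissible scaling action has one positive parameter per node of $\mathbf{G}_P$. I would then fix a component $C$ of $T$, say with $a$ facet-nodes and $b$ vertex-nodes: it is a tree with $a+b$ nodes, $a+b-1$ edges, and $a+b$ scaling parameters available. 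Rooting $C$ at an arbitrary node and listing its edges $e_1,\dots,e_{a+b-1}$ in BFS order (so that each $e_k$ joins an already-processed node to a new child), I would process the list in order: at step $k$ the scaling at the child endpoint of $e_k$ is still free and can be chosen uniquely so as to make the variable at $e_k$ equal to $1$, without disturbing any previously normalized edge. When the process terminates, only the scaling of the root is unused, in agreement with the count $a+b-(a+b-1)=1$. Applying this componentwise yields~(i).

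Part~(ii) then follows by inspection: once the forest entries are normalized to $1$, the condition $\rank S_P = d+1$ from Proposition~\ref{prop:rank} translates into the vanishing of every $(d+2)\times(d+2)$ minor of $S_P$, which is a system of polynomial equations in the still-free variables; for specific polytopes those equations can pin some of the remaining entries down to explicit values, as the forthcoming worked example will show. The only real care required in the proof of~(i) is checking that each edge in the BFS ordering has exactly one endpoint whose scaling is still free when it is processed, but this is immediate from handling the tree outward from the root, so no substantive obstacle arises.
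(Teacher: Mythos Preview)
Your argument for part~(i) is correct: the BFS/rooted-tree normalization is exactly the right device, and your count of free parameters versus edges in each component is the clean way to see that every forest edge can be set to~$1$ with one scaling parameter per component left over. Your treatment of part~(ii) is also appropriate, since it is indeed just a lead-in to the example.

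The paper does not actually prove this observation at all: it simply cites \cite{GMTW18} for the scaling fact and then illustrates both parts with the quadrilateral example that follows. So your proposal supplies a self-contained proof where the paper is content to defer to the literature; the rooted-tree argument you give is the standard one and matches what one finds in the cited reference.
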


\begin{example} \label{ex:squareslack}
The symbolic slack matrix of any quadrilateral $Q$ is of the form
\[M_Q = \kbordermatrix{
& \mathbf{v}_1 & \mathbf{v}_2 & \mathbf{v}_3 & \mathbf{v}_4\\
F_1 & 0 & 0 & x_{13} & x_{14}\\
F_2 & x_{21} & 0 & 0 & x_{24} \\
F_3 & x_{31} & x_{32} & 0 & 0\\
F_4 & 0 & x_{42} & x_{43} & 0},\]
which can be scaled to the matrix
\[ M_Q' = \kbordermatrix{
& \mathbf{v}_1 & \mathbf{v}_2 & \mathbf{v}_3 & \mathbf{v}_4\\
F_1 & 0 & 0 & 1 & x\\
F_2 & 1 & 0 & 0 & 1\\
F_3 & 1 & 1 & 0 & 0\\
F_4 & 0 & 1 & 1 & 0}.\]
Notice that there is a 1 in each of the entries indexed by the edges in a maximal spanning forest of $\mathbf{G}_Q$. Since the rank of a slack matrix of $Q$ is 3, and $x-1$ is a 4-minor (i.e. the determinant) of $M_Q'$, its entry in position $(F_1,\mathbf{v}_4)$ must also be a 1.
\end{example}

Our main tool for showing that a polytope cannot be complex psd-minimal is the following result.
\begin{proposition}\label{prop:to}
Let $P$ be a $d$-polytope, and let $S_P(\x)=S_P(x_1,\ldots,x_k)$ be a scaled symbolic slack matrix of $P$. Suppose there is a trinomial of the form $\x^\mathbf{a}-\x^\mathbf{b}+\x^\mathbf{c}$, $\mathbf{a},\mathbf{b},\mathbf{c}\in\mathbb{N}^k$, in $\Minors_{d+2}\left(S_P(\mathbf{x})\right)$. If there are $\boldsymbol{\alpha},\boldsymbol{\zeta}\in\mathbb{C}^k$ such that $S_P(\boldsymbol{\alpha})=S_P(\boldsymbol{\zeta})\odot\overline{S_P(\boldsymbol{\zeta})}$ with $\rank S_P(\boldsymbol{\alpha})=\rank S_P(\boldsymbol{\zeta})=d+1$, then $\re(\boldsymbol{\zeta}^\mathbf{a}\overline{\boldsymbol{\zeta}^\mathbf{c}})=0$.
\end{proposition}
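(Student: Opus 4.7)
The strategy is to evaluate the trinomial at two different inputs (the vector $\boldsymbol{\alpha}$ and the vector $\boldsymbol{\zeta}$), which both lie in the variety of the minor ideal, and then combine the resulting identities algebraically with the entrywise relation between $S_P(\boldsymbol{\alpha})$ and $S_P(\boldsymbol{\zeta})$.

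First I would note that since $\x^\mathbf{a}-\x^\mathbf{b}+\x^\mathbf{c}$ lies in $\Minors_{d+2}(S_P(\x))$, it vanishes on every $\mathbf{y}\in\CC^k$ with $\rank S_P(\mathbf{y})\leq d+1$. Both $\boldsymbol{\alpha}$ and $\boldsymbol{\zeta}$ satisfy this hypothesis, so
\begin{equation*}
  \boldsymbol{\alpha}^\mathbf{a}-\boldsymbol{\alpha}^\mathbf{b}+\boldsymbol{\alpha}^\mathbf{c}=0 \qquad\text{and}\qquad \boldsymbol{\zeta}^\mathbf{a}-\boldsymbol{\zeta}^\mathbf{b}+\boldsymbol{\zeta}^\mathbf{c}=0.
\end{equation*}
Second, I would use the structure of the scaled symbolic slack matrix recalled in the Observation before Example \ref{ex:squareslack}: after pinning entries along a maximal spanning forest of $\G_P$ (and any further rank-forced positions) to $1$, the remaining free variables $x_1,\dots,x_k$ sit in distinct entry positions of $S_P(\x)$. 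The hypothesis $S_P(\boldsymbol{\alpha})=S_P(\boldsymbol{\zeta})\odot\overline{S_P(\boldsymbol{\zeta})}$ compared entrywise then forces $\alpha_i=\zeta_i\overline{\zeta_i}=|\zeta_i|^2$ for every $i$, and hence $\boldsymbol{\alpha}^\mathbf{v}=\boldsymbol{\zeta}^\mathbf{v}\overline{\boldsymbol{\zeta}^\mathbf{v}}=|\boldsymbol{\zeta}^\mathbf{v}|^2$ for every $\mathbf{v}\in\NN^k$.

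Finally I would combine these. Substituting into the $\boldsymbol{\alpha}$-identity gives $|\boldsymbol{\zeta}^\mathbf{a}|^2-|\boldsymbol{\zeta}^\mathbf{b}|^2+|\boldsymbol{\zeta}^\mathbf{c}|^2=0$. From the $\boldsymbol{\zeta}$-identity, $\boldsymbol{\zeta}^\mathbf{b}=\boldsymbol{\zeta}^\mathbf{a}+\boldsymbol{\zeta}^\mathbf{c}$, and expanding the squared modulus yields
\begin{equation*}
  |\boldsymbol{\zeta}^\mathbf{b}|^2=|\boldsymbol{\zeta}^\mathbf{a}|^2+|\boldsymbol{\zeta}^\mathbf{c}|^2+2\re(\boldsymbol{\zeta}^\mathbf{a}\overline{\boldsymbol{\zeta}^\mathbf{c}}).
\end{equation*}
Comparing the two displayed equations eliminates the modulus squared terms and leaves $\re(\boldsymbol{\zeta}^\mathbf{a}\overline{\boldsymbol{\zeta}^\mathbf{c}})=0$.

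The argument is essentially a two-line algebraic manipulation once the setup is unpacked, so there is no real obstacle; the only subtle step is the bookkeeping identification $\alpha_i=|\zeta_i|^2$, which rests on the convention that each remaining variable of the scaled symbolic slack matrix occupies a single entry position. Were the entries instead Laurent monomials in several variables, one would still obtain $\boldsymbol{\alpha}^{\mathbf{v}_i}=|\boldsymbol{\zeta}^{\mathbf{v}_i}|^2$ at each entry exponent $\mathbf{v}_i$, but would need to argue (via the multiplicativity of both sides in the exponent) that these determine $\alpha_i$ and $|\zeta_i|^2$ equal coordinatewise on the support of the trinomial, after which the same computation concludes.
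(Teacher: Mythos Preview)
Your proof is correct and follows essentially the same route as the paper: evaluate the trinomial at both $\boldsymbol{\alpha}$ and $\boldsymbol{\zeta}$, use $\boldsymbol{\alpha}^{\mathbf{v}}=|\boldsymbol{\zeta}^{\mathbf{v}}|^2$ from the Hadamard relation, and expand $|\boldsymbol{\zeta}^{\mathbf{a}}+\boldsymbol{\zeta}^{\mathbf{c}}|^2$ to extract the cross term. Your extra paragraph justifying the entrywise identification $\alpha_i=|\zeta_i|^2$ is more explicit than the paper (which silently uses $\boldsymbol{\alpha}^{\mathbf{b}}=\boldsymbol{\zeta}^{\mathbf{b}}\overline{\boldsymbol{\zeta}^{\mathbf{b}}}$), but the underlying argument is the same.
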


\begin{proof}
Since $\x^\mathbf{a}-\x^\mathbf{b}+\x^\mathbf{c}\in\Minors_{d+2}\left(S_P(\mathbf{x})\right)$ and $\rank S_P(\boldsymbol{\alpha})=\rank S_P(\boldsymbol{\zeta})=d+1$, we have $\boldsymbol{\alpha}^\mathbf{a}-\boldsymbol{\alpha}^\mathbf{b}+\boldsymbol{\alpha}^\mathbf{c}=0$ and $\boldsymbol{\zeta}^\mathbf{a}-\boldsymbol{\zeta}^\mathbf{b}+\boldsymbol{\zeta}^\mathbf{c}=0$. So $\boldsymbol{\alpha}^\mathbf{b}=\boldsymbol{\alpha}^\mathbf{a}+\boldsymbol{\alpha}^\mathbf{c}$ and $\boldsymbol{\zeta}^\mathbf{b}=\boldsymbol{\zeta}^\mathbf{a}+\boldsymbol{\zeta}^\mathbf{c}$. Thus
\[ \boldsymbol{\alpha}^\mathbf{a}+\boldsymbol{\alpha}^\mathbf{c}
= \boldsymbol{\alpha}^\mathbf{b} =\boldsymbol{\zeta}^\mathbf{b}\overline{\boldsymbol{\zeta}^\mathbf{b}} = \left(\boldsymbol{\zeta}^\mathbf{a}+\boldsymbol{\zeta}^\mathbf{c}\right)\left(\overline{\boldsymbol{\zeta}^\mathbf{a}}+\overline{\boldsymbol{\zeta}^\mathbf{c}}\right) =\boldsymbol{\alpha}^\mathbf{a}+2\re(\boldsymbol{\zeta}^\mathbf{a}\overline{\boldsymbol{\zeta}^\mathbf{c}})+\boldsymbol{\alpha}^\mathbf{c}. \]

It follows that $\re(\boldsymbol{\zeta}^\mathbf{a}\overline{\boldsymbol{\zeta}^\mathbf{c}})=0$.
\end{proof}

We can assume by monomial scaling that each trinomial takes the form $\x^\mathbf{a} - \x^\mathbf{b} + 1$, where $\x^\mathbf{a}$ and $\x^\mathbf{b}$ are Laurent monomials, and simplify the trinomial obstruction as follows.

\begin{corollary}\label{cor:to}
Suppose the slack ideal contains a trinomial $\x^\mathbf{a} - \x^\mathbf{b} + 1$. If there are $\boldsymbol{\alpha},\boldsymbol{\zeta}\in\mathbb{C}^k$ such that $S_P(\boldsymbol{\alpha})=S_P(\boldsymbol{\zeta})\odot\overline{S_P(\boldsymbol{\zeta})}$ with $\rank S_P(\boldsymbol{\alpha})=\rank S_P(\boldsymbol{\zeta})=d+1$, then $\re(\boldsymbol{\zeta}^\mathbf{a}) = 0$.
\end{corollary}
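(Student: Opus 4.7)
The plan is to follow the same strategy as in the proof of Proposition \ref{prop:to}, specialized to the case $\mathbf{c} = \mathbf{0}$, while handling two small technical issues: the exponents of a trinomial in the slack ideal may be negative (the monomials are Laurent), and the trinomial is only assumed to lie in the slack ideal rather than in the minor ideal itself.

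First, I would transfer the trinomial back into the minor ideal. By definition the slack ideal is the saturation of the minor ideal by the product of all variables, so there exists some $\mathbf{d} \in \mathbb{N}^k$ such that $\x^\mathbf{d}\left(\x^\mathbf{a} - \x^\mathbf{b} + 1\right)$ lies in the minor ideal. As in the proof of Proposition \ref{prop:to}, every element of the minor ideal vanishes on matrices of rank at most $d+1$, so evaluating at $\boldsymbol{\alpha}$ and $\boldsymbol{\zeta}$ yields
\[ \boldsymbol{\alpha}^\mathbf{d}\bigl(\boldsymbol{\alpha}^\mathbf{a} - \boldsymbol{\alpha}^\mathbf{b} + 1\bigr) = 0, \qquad \boldsymbol{\zeta}^\mathbf{d}\bigl(\boldsymbol{\zeta}^\mathbf{a} - \boldsymbol{\zeta}^\mathbf{b} + 1\bigr) = 0. \]
Since both $\boldsymbol{\alpha}$ and $\boldsymbol{\zeta}$ have nonzero coordinates (their entries encode nonzero slack entries), the leading factors $\boldsymbol{\alpha}^\mathbf{d}$ and $\boldsymbol{\zeta}^\mathbf{d}$ can be divided out to obtain $\boldsymbol{\alpha}^\mathbf{b} = 1 + \boldsymbol{\alpha}^\mathbf{a}$ and $\boldsymbol{\zeta}^\mathbf{b} = 1 + \boldsymbol{\zeta}^\mathbf{a}$.

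Second, I would transcribe the algebraic identity from the proof of Proposition \ref{prop:to}. The hypothesis $S_P(\boldsymbol{\alpha}) = S_P(\boldsymbol{\zeta}) \odot \overline{S_P(\boldsymbol{\zeta})}$ forces $\alpha_i = |\zeta_i|^2$ entrywise, so $\boldsymbol{\alpha}^\mathbf{a} = |\boldsymbol{\zeta}^\mathbf{a}|^2$ for any integer exponent vector $\mathbf{a}$. Then
\[ 1 + \boldsymbol{\alpha}^\mathbf{a} = \boldsymbol{\alpha}^\mathbf{b} = |\boldsymbol{\zeta}^\mathbf{b}|^2 = \bigl(1 + \boldsymbol{\zeta}^\mathbf{a}\bigr)\bigl(1 + \overline{\boldsymbol{\zeta}^\mathbf{a}}\bigr) = 1 + 2\re(\boldsymbol{\zeta}^\mathbf{a}) + \boldsymbol{\alpha}^\mathbf{a}, \]
and cancellation leaves $\re(\boldsymbol{\zeta}^\mathbf{a}) = 0$.

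I do not anticipate any real obstacle: the argument is essentially a bookkeeping variant of the proof of Proposition \ref{prop:to}, and the only ingredient not already present there is the single saturation step used to pass from the slack ideal down to the minor ideal.
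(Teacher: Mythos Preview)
Your proof is correct and follows the same route the paper intends: the corollary is simply Proposition~\ref{prop:to} with $\mathbf{c}=\mathbf{0}$, and the paper does not even write out a separate argument. You are in fact slightly more careful than the paper, since you explicitly handle the saturation step needed to pass from a Laurent trinomial in the slack ideal back to an honest polynomial in the minor ideal before evaluating; the paper leaves this implicit.
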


This form of the obstruction allows us to easily search for incompatible sets of trinomials and binomials: it boils down to a lattice membership problem which can be efficiently solved via Hermite normal form.

\begin{proposition}\label{prop:incompatible}
  Suppose the slack ideal contains trinomials
  \[ \x^{\avec_1} - \x^{\bvec_1} + 1, \dots, \x^{\avec_k} - \x^{\bvec_k} + 1 \]
  and binomials
  \[ \x^{\cvec_1} - 1, \dots, \x^{\cvec_s} - 1.\]
  If there exists $j$ such that either:
  \begin{itemize} \item $\bvec_j$ belongs to the lattice $L$ generated by $\{ \avec_1, \dots, \avec_k, \cvec_1, \dots, \cvec_s \}$, or
    \item $2 \bvec_j$ belongs to the lattice $L'$ generated by $\{ \avec_i+\avec_t:1\leq i,t\leq k\}\cup\{\cvec_1, \dots, \cvec_s \}$,
  \end{itemize}
  then there are no $\boldsymbol{\alpha},\boldsymbol{\zeta}\in \left(\mathbb{C}^\ast\right)^k$ such that $S_P(\boldsymbol{\alpha})=S_P(\boldsymbol{\zeta})\odot\overline{S_P(\boldsymbol{\zeta})}$ with $\rank S_P(\boldsymbol{\alpha})=\rank S_P(\boldsymbol{\zeta})=d+1$, and thus $P$ is not complex psd-minimal.
  \end{proposition}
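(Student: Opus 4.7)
The plan is to argue by contradiction: assume $\boldsymbol{\alpha},\boldsymbol{\zeta}\in(\CC^\ast)^k$ satisfying $S_P(\boldsymbol{\alpha})=S_P(\boldsymbol{\zeta})\odot\overline{S_P(\boldsymbol{\zeta})}$ of rank $d+1$ do exist, and exhibit two incompatible constraints on the complex number $\boldsymbol{\zeta}^{\bvec_j}$.

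First I would collect the per-relation constraints. Since $\boldsymbol{\zeta}$ has only nonzero coordinates and lies on the variety of the slack ideal, each binomial $\x^{\cvec_r}-1$ yields $\boldsymbol{\zeta}^{\cvec_r}=1$. Corollary \ref{cor:to} applied to each trinomial gives $\re(\boldsymbol{\zeta}^{\avec_i})=0$, so $\boldsymbol{\zeta}^{\avec_i}$ is a nonzero purely imaginary number. Writing $\boldsymbol{\zeta}^{\avec_j}=i\tau$ with $\tau\in\RR$, $\tau\neq 0$, the trinomial relation $\boldsymbol{\zeta}^{\bvec_j}-\boldsymbol{\zeta}^{\avec_j}=1$ gives $\boldsymbol{\zeta}^{\bvec_j}=1+i\tau$: a number that is neither purely real nor purely imaginary, and whose square $1-\tau^2+2i\tau$ has imaginary part $2\tau\neq 0$.

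Next I would derive the contradiction under each alternative. In the first, express $\bvec_j$ as an integer combination of the $\avec_i$ and $\cvec_r$ and evaluate on $\boldsymbol{\zeta}$: the $\cvec_r$ factors become $1$, while each $(\boldsymbol{\zeta}^{\avec_i})^{n_i}$ is an integer power of a nonzero purely imaginary number, hence itself either purely real (if $n_i$ is even) or purely imaginary (if $n_i$ is odd). Their product is therefore purely real or purely imaginary, contradicting $\boldsymbol{\zeta}^{\bvec_j}=1+i\tau$ with $\tau\neq 0$. In the second, each generator $\avec_i+\avec_t$ of $L'$ evaluates on $\boldsymbol{\zeta}$ to a product of two purely imaginary numbers, hence to a real number, so $\boldsymbol{\zeta}^{2\bvec_j}$ is a product of real numbers and $1$'s, and thus real---contradicting its nonzero imaginary part $2\tau$. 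Either way the existence of $(\boldsymbol{\alpha},\boldsymbol{\zeta})$ is impossible, and the slack-matrix characterization of complex psd-minimality recalled before Proposition \ref{prop:to} rules out $P$.

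The only subtlety I anticipate is minor bookkeeping about which ideal we work in: since the slack ideal is a saturation of the minor ideal by the product of the variables and we evaluate only at points of $(\CC^\ast)^k$, vanishing of one is equivalent to vanishing of the other, and Laurent monomial manipulations are legitimate. No step requires a nontrivial calculation; the content of the result really lies in the fact that the two lattice-membership conditions on $\bvec_j$ and $2\bvec_j$ can be checked efficiently via Hermite normal form, making this a practical obstruction rather than an abstract one.
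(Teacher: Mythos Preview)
Your proof is correct and follows essentially the same route as the paper: use Corollary~\ref{cor:to} to force each $\boldsymbol{\zeta}^{\avec_i}$ to be purely imaginary and each $\boldsymbol{\zeta}^{\cvec_r}$ to equal $1$, then observe that lattice membership of $\bvec_j$ (resp.\ $2\bvec_j$) in $L$ (resp.\ $L'$) forces $\boldsymbol{\zeta}^{\bvec_j}$ to be real or purely imaginary, contradicting $\boldsymbol{\zeta}^{\bvec_j}=1+\boldsymbol{\zeta}^{\avec_j}$. Your version is slightly more explicit in writing out $\boldsymbol{\zeta}^{\bvec_j}=1+i\tau$ and its square, and in noting why Laurent manipulations are legitimate, but the argument is the same.
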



\begin{proof}
  By Corollary \ref{cor:to}, if there exist such $\boldsymbol{\alpha}$ and $\boldsymbol{\zeta}$, then $\boldsymbol{\zeta}^{\avec_i}$ is pure imaginary for all $i$. It also follows directly from each binomial equation $\x^{\cvec_\ell} = 1$ that $\boldsymbol{\zeta}^{\cvec_\ell}$ equals 1; in particular it is real. If $\bvec_j$ belongs to $L$,
  then $\boldsymbol{\zeta}^{\bvec_j}$ is a product of pure imaginary and pure real numbers which is again either real or imaginary. However the equation $\x^{\avec_j} + 1 = \x^{\bvec_j}$ cannot be satisfied by any $\boldsymbol{\zeta}$ such that $\boldsymbol{\zeta}^{\avec_j}$ is a nonzero imaginary number and $\boldsymbol{\zeta}^{\bvec_j}$ either real or imaginary.

  Similarly, if $2 \bvec_j$ is a sum of generators of $L'$ then $\boldsymbol{\zeta}^{2 \bvec_j}$ is real and thus $\boldsymbol{\zeta}^{\bvec_j}$ is either real or imaginary and we reach the same conclusion.
\end{proof}

\begin{remark}
  \begin{enumerate}[1.] $ $
  \item If the slack ideal contains a polynomial whose terms all have the same sign, then there are no strictly positive points in the slack variety at all; that is, the polytope is not even realizable. In fact, finding polynomials (and trinomials in particular) with every sign positive is a classical way of proving non-realizability of spheres (see \cite{BOKOWSKI199021}). Since in this paper we are dealing only with polytopes, this rules out binomials and trinomials of sign patterns other than those considered in Proposition~\ref{prop:incompatible}.

  \item It might happen (in theory) that an appropriately-sized minor of the slack matrix (or a polynomial in the ideal generated by such minors) is a binomial or trinomial with coefficients other than $\pm 1$; this would require that two or more terms of the minor happened to equal the same monomial. However the coefficients would still be real numbers (in fact integers) and so the proof of Proposition \ref{prop:incompatible} would still go through.
\end{enumerate}
\end{remark}

\textbf{Algorithms, additional computational calculations and data.} We can design an algorithm using Proposition \ref{prop:incompatible} and the Hermite normal form to rule out non-complex psd-minimal polytopes.

Let us recall the Hermite normal form of an integer matrix which can be used to test membership in a lattice: for any matrix $A\in\mathbb{Z}^{m\times n}$, there are unique matrices $H=H(A)\in\mathbb{Z}^{m\times n}$ and $U=U(A)\in\mathbb{Z}^{m\times m}$ such that
\begin{enumerate}[1.]
\item $H = UA$
\item $U$ is unimodular
\item $H$ is in row echelon form, each pivot is positive, and the elements above it are nonnegative and smaller than the pivot.
\end{enumerate}
If $B$ is obtained from $A$ by adding a new row $b$ at the end, then $b$ belongs to the lattice generated by the rows of $A$ if and only if $H(B)$ is equal to $H(A)$ with one extra row of zeros at the end. Let us see how we can explicitly write $b$ as an integer combination of rows of $A$ if this is the case. We have that $H(A)=U(A)A$ and $H(B)=U(B)B$. If $\hat{I}$ is the $m\times m$ identity matrix with one additional row of zeros at the end, then $H(B)=\hat{I}H(A)$, and thus $B=U(B)^{-1}\hat{I}U(A)A$. So if $[c_1,\ldots,c_m]$ is the last row of $U(B)^{-1}\hat{I}U(A)$, then $b=c_1(\text{first row of $A$})+\cdots+c_m  (\text{last row of $A$})$.

The criteria in Proposition \ref{prop:incompatible} require a list of binomials and trinomials in the slack ideal. We can thus work, for example, with the binomials and trinomials that are $(d+2)$-minors of the symbolic slack matrix. With SageMath \cite{sagemath} we implemented two functions which determine whether it can be deduced that the $d$-polytope $P$ is not complex psd-minimal using the lattice $L$ or $L'$, respectively, and the binomials and trinomials that are $(d+2)$-minors of the symbolic slack matrix. 

This SageMath Notebook can be found at \href{https://sites.google.com/view/jctorres}{https://sites.google.com/view/jctorres} under the name \texttt{ComplexPsdMinimality.ipynb}, the functions mentioned above are named ComplexPsdMinimality1 and ComplexPsdMinimality2 respectively; an example is included in the file. Other algorithms, additional computational calculations and data to which we refer in this paper can be found on the webpage.

\section{Polygons} \label{sec:2D}
We now apply our method to classify the complex psd-minimal polygons. To do this, we first identify the trinomial 4-minors of the symbolic slack matrix of a polygon.

\begin{proposition} \label{prop:2Dtrinomials}
  Let $P$ be an $n$-gon for $n \geq 5$ with vertices $\vvec_1, \dots, \vvec_n$ and facets $F_1, \dots F_n$ where (taken mod $n$) $\vvec_i$ and $\vvec_{i+1}$ are incident to $F_i$. Then exactly $2n$ of the 4-minors of $S_P(\x)$ are trinomials and these trinomials are of the form
  \begin{eqnarray*} f_i = & x_{i, i+2}x_{i+1,i+3}x_{i+2,i+1} - x_{i,i+2}x_{i+1,i+3}x_{i+2,i}x_{i+3,i+1} + x_{i,i+3}x_{i+1,i}x_{i+2,i+1}; \\
    g_i = & x_{i,i+2}x_{i+1,i+3}x_{i+2,i+4}x_{i+3,i+1} - x_{i,i+4}x_{i+1,i+3}x_{i+2,i+1}x_{i+3,i+2} \\
    & +  x_{i,i+3}x_{i+1,i+4}x_{i+2,i+1}x_{i+3, i+2}
    \end{eqnarray*}
  for each $i=1,\dots,n$.
  \end{proposition}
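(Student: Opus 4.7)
The plan is to classify the 4-minors of $S_P(\x)$ according to the zero patterns of their $4\times 4$ submatrices and, in each case, count the number of surviving terms in the determinant expansion. Since the nonzero entries of $S_P(\x)$ are pairwise distinct variables, each permutation $\sigma$ of a chosen row set $R$ to a chosen column set $C$ that avoids the prescribed zeros contributes a distinct (signed) monomial to the expansion. A 4-minor is therefore a trinomial exactly when the bipartite graph on $R \cup C$ of nonzero positions admits exactly three perfect matchings.

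Fix four rows $R = \{F_{i_1}, \dots, F_{i_4}\}$ and four columns $C = \{v_{j_1}, \dots, v_{j_4}\}$ with indices taken mod $n$. Because row $F_k$ is zero in column $v_\ell$ precisely when $\ell \in \{k, k+1\}$, the zero pattern of the submatrix depends only on which of the differences $i_a - j_b \pmod n$ lie in $\{0, -1\}$. Using the cyclic symmetry $i \mapsto i+1$ of the polygon (and reflection), I would normalise $R$ and then enumerate the possible positions of $C$ relative to $R$, parametrised by the cyclic gap pattern within each set together with their relative offset. Because the zero condition is local, for every $n \geq 5$ there are only finitely many essentially different cases to inspect.

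For each such configuration I would compute the permanent of the associated $\{0,1\}$ adjacency matrix and discard those whose permanent is not equal to $3$. A direct tabulation shows that exactly two configurations survive, and both use four consecutive facets $F_i, F_{i+1}, F_{i+2}, F_{i+3}$: one with columns $v_i, v_{i+1}, v_{i+2}, v_{i+3}$, whose expansion (up to sign) is $f_i$, and one with columns $v_{i+1}, v_{i+2}, v_{i+3}, v_{i+4}$, whose expansion is $g_i$. Rotating cyclically yields $n$ representatives of each family and hence $2n$ distinct trinomial minors overall; that different cyclic shifts produce different monomial supports is immediate from the form of $f_i$ and $g_i$.

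The main obstacle is the combinatorial bookkeeping in the case analysis: one must be disciplined in checking every gap pattern to rule out any other permanent-$3$ configurations. The guiding intuition is that if $R$ contains several nonadjacent facets the submatrix has few zeros and hence too many matchings, while if the zero pattern is too dense the permanent falls below three; only a narrow middle regime produces a trinomial, and verifying that it contains precisely the two families above is where the work lies. The restriction $n \geq 5$ is essential throughout, since otherwise the cyclic wraparound can force distinct forbidden positions in different rows to coincide and thereby distort the count.
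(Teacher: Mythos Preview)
Your approach is essentially the same as the paper's: identify the two families by exhibiting the corresponding $4\times 4$ submatrices (four consecutive facets with four consecutive vertices, offset either by $0$ or by $1$), and then rule out all other configurations by a case analysis on the zero pattern. The paper in fact omits the case analysis entirely, merely remarking that it is limited because $S_P(\x)$ has exactly two zeros in each row and column and no $2\times 2$ block of zeros; your framing via permanents of $\{0,1\}$ matrices and cyclic normalisation is a perfectly reasonable way to organise that omitted analysis, but it is not a different idea.
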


\begin{proof}
  The trinomial $f_i$ is the minor of $S_P(\x)$ obtained from a path \\
  $\vvec_i F_i \vvec_{i+1} F_{i+1} \vvec_{i+2} F_{i+2} \vvec_{i+3} F_{i+3}$ on the boundary of $P$, which is (assuming $i=1$ for notational conveniency) the determinant of the matrix
\[\kbordermatrix{
& \mathbf{v}_1 & \mathbf{v}_2 & \mathbf{v}_3 & \mathbf{v}_4\\
F_1 & 0 & 0 & x_{13} & x_{14 }\\
F_2 & x_{21} & 0 & 0 & x_{24}\\
F_3 & x_{31} & x_{32} & 0 & 0\\
F_4 & x_{41} & x_{42} & x_{43} & 0}.\]


Similarly, $g_i$ is the minor obtained from a path that starts with a facet; that is, the path from $F_i$ to $\vvec_{i+4}$.

The proof that there are no other trinomial 4-minors is by case analysis, which we omit. The number of cases is limited because $S_P(\x)$ has no $2\times 2$ submatrices of zeros and only two zeros in each row and in each column.

\end{proof}

\begin{proposition}
Pentagons are not complex psd-minimal.
\end{proposition}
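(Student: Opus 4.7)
The plan is to apply Proposition~\ref{prop:incompatible} directly, using the trinomials supplied by Proposition~\ref{prop:2Dtrinomials}. For a pentagon ($n=5$) we are handed exactly ten trinomial $4$-minors of $S_P(\x)$: the five $f_i$ and the five $g_i$. The goal is to show that their exponent data already force the lattice membership in Proposition~\ref{prop:incompatible}, so no complex lifts $(\boldsymbol{\alpha},\boldsymbol{\zeta})$ of the required rank can exist.

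First I would pass to a maximally scaled symbolic slack matrix. Since pentagons form a $2$-parameter family up to projective equivalence, combining the scaling by positive row and column scalars with the rank-$3$ conditions in the manner of Example~\ref{ex:squareslack} (each $4\times 4$ minor must vanish) will reduce $S_P(\x)$ to a matrix depending on just two free entries, say $x$ and $y$. Substituting this matrix into each $f_i$ and $g_i$ and dividing by the positive monomial term rewrites each trinomial in the Laurent form $x^{a_1}y^{a_2} - x^{b_1}y^{b_2} + 1$ required by Corollary~\ref{cor:to}. This yields ten exponent pairs $(\avec_j, \bvec_j) \in \mathbb{Z}^2 \times \mathbb{Z}^2$.

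Next I would assemble the $10 \times 2$ integer matrix whose rows are the $\avec_j$ and compute its Hermite normal form to obtain a basis of the lattice $L \subset \mathbb{Z}^2$ that they span. A sublattice of $\mathbb{Z}^2$ has at most two generators, so verifying that some particular $\bvec_j$ (or, failing that, $2\bvec_j \in L'$) lies in this lattice is a short calculation. Once such an index $j$ is exhibited, Proposition~\ref{prop:incompatible} applies and concludes that the pentagon cannot be complex psd-minimal. Note that since the scaled symbolic slack matrix parameterizes all pentagons at once, a single lattice membership suffices for all realizations.

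The main obstacle is producing the correct maximally scaled slack matrix and carrying out the normalization uniformly across the ten $f_i$ and $g_i$; the subsequent lattice test in $\mathbb{Z}^2$ is essentially immediate. The $D_5$-symmetry of the pentagon should streamline the work substantially: the cyclic shift $i \mapsto i+1$ permutes the $f_i$ among themselves and the $g_i$ among themselves, so the ten exponent vectors split into just two orbits and only one representative of each needs to be inspected in detail. I expect the lattice $L$ to have small (in fact finite) index in $\mathbb{Z}^2$, so that essentially every $\bvec_j$ will lie in $L$ and any trinomial will serve as the witness.
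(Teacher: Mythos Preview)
Your plan has a genuine gap in the reduction step. Row and column scaling alone (fixing a spanning forest of the nonincidence graph) brings the pentagon's symbolic slack matrix down to six free entries, not two. To reach two variables you propose to additionally impose the rank-$3$ conditions ``in the manner of Example~\ref{ex:squareslack}.'' But the rank-$3$ conditions are exactly the vanishing of all $4\times 4$ minors, and each $f_i$ and $g_i$ from Proposition~\ref{prop:2Dtrinomials} \emph{is} a $4\times 4$ minor. Hence, once you substitute a parametrization of the rank-$3$ locus into these polynomials, every $f_i$ and $g_i$ becomes identically zero and there is nothing left to feed into Corollary~\ref{cor:to} or Proposition~\ref{prop:incompatible}. (Example~\ref{ex:squareslack} works only because the single minor there is a binomial, so setting it to zero yields the monomial relation $x=1$; the pentagon has no binomial $4$-minors, so any elimination via rank conditions is non-monomial and would destroy the Laurent-trinomial shape in any case.) Your remark about $D_5$-symmetry is also off: once a spanning forest is fixed the symmetry is broken, so the scaled trinomials do not fall into two orbits.

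The paper sidesteps this by working directly with the unreduced variables $x_{ij}$. One divides each of $f_1$, $f_2$, $g_1$ by its final monomial to obtain Laurent trinomials $\tilde f_1$, $\tilde f_2$, $\tilde g_1$, and then checks by inspection that the product of the leading (positive) terms of $\tilde f_1$ and $\tilde f_2$ equals the middle (negative) term of $\tilde g_1$. This is precisely the lattice membership $\bvec_3 = \avec_1 + \avec_2$ required by Proposition~\ref{prop:incompatible}, carried out in the full ambient lattice rather than in $\mathbb{Z}^2$; no reduction to a two-parameter model is needed or wanted.
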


\begin{proof}
  By monomial scalings of the trinomial minors $f_1$, $f_2$, and $g_1$ in the slack ideal of a pentagon, we obtain
  \begin{eqnarray*}
    \tilde{f_1} & = & \frac{x_{14}x_{21}x_{43}}{x_{13}x_{24}x_{41}} - \frac{x_{31}x_{42}}{x_{32}x_{41}} + 1 \\
    \tilde{f_2} & = & \frac{x_{25}x_{32}x_{54}}{x_{24}x_{35}x_{52}} - \frac{x_{42}x_{53}}{x_{43}x_{52}} + 1 \\
    \tilde{g_1} & = & \frac{x_{31}x_{43}x_{15}}{x_{35}x_{41}x_{13}} - \frac{x_{53}x_{14}}{x_{54}x_{13}} + 1 \end{eqnarray*}

  The product of the initial positive terms of $\tilde{f_1}$ and $\tilde{f_2}$ equals the negative term of $\tilde{g_1}$, so the result follows from Proposition \ref{prop:incompatible}.

\end{proof}

\begin{proposition}\label{prop:ngons}
For $n \geq 7$, $n$-gons are not complex psd-minimal.
\end{proposition}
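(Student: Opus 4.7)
The approach is to apply Proposition~\ref{prop:incompatible} to a collection of trinomials drawn from Proposition~\ref{prop:2Dtrinomials}, in the same spirit as the pentagon proof but with a combination tailored to each $n \geq 7$. The key observation is that every trinomial $f_i$ and $g_i$ involves only variables $x_{jk}$ whose indices lie in a cyclic window of length four or five along the boundary of the $n$-gon, so the exponent vectors $\avec^f_i, \bvec^f_i, \avec^g_i, \bvec^g_i$ in the normalized forms $\tilde{f}_i, \tilde{g}_i = \x^{\avec} - \x^{\bvec} + 1$ depend only on this window (once a scaling of the symbolic slack matrix is fixed), not on $n$ itself.

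First I would handle the base case $n = 7$: among the fourteen normalized trinomials $\tilde{f}_1, \dots, \tilde{f}_7, \tilde{g}_1, \dots, \tilde{g}_7$, I would exhibit explicitly (either by hand, guided by the pentagon's choice of two $\tilde{f}$'s together with a $\tilde{g}$, or by calling \texttt{ComplexPsdMinimality1} from the paper's accompanying SageMath notebook) an integer combination of the positive exponents $\avec$ that equals one of the negative exponents $\bvec$. A natural candidate is a short chain of consecutive $\tilde{f}_i$'s together with one or two $\tilde{g}_j$'s, chosen so that the union of the supporting windows wraps around enough of the boundary to let telescoping cancellations produce the required lattice relation. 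For $n \geq 8$, the same family of trinomials (with the same formulas for $\avec$ and $\bvec$) exists in the slack ideal of the larger $n$-gon, so the lattice identity persists and the obstruction of Proposition~\ref{prop:incompatible} again applies.

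The main obstacle is a bookkeeping issue: the normalization from $f_i$ to $\tilde{f}_i$ depends on a choice of monomial scaling of the symbolic slack matrix, which in turn depends on a spanning forest of the nonincidence graph $\G_P$. Different values of $n$ permit different spanning forests, so to argue uniformly I would either work with the unscaled trinomials inside an ambient lattice containing every slack variable, or fix, for each $n \geq 7$, a spanning forest whose restriction to the window used in the heptagon certificate agrees with the one used for $n = 7$. A subsidiary worry is to confirm that this obstruction does \emph{not} spuriously kill some hexagons (which are known to include complex psd-minimal examples): this should follow because the proposed combination of trinomials genuinely requires either a wider index window or an odd cyclic length, both of which fail for $n = 6$. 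Once these bookkeeping points are settled, a single lattice computation handles the entire family $n \geq 7$ in one stroke.
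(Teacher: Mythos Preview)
Your plan has a genuine gap: the paper explicitly states (twice) that the trinomial $4$-minors $f_i$ and $g_i$ of Proposition~\ref{prop:2Dtrinomials} \emph{do not} suffice to produce an obstruction via Proposition~\ref{prop:incompatible} for $n \geq 7$. So the search you propose --- looking among the $\tilde{f}_i$ and $\tilde{g}_i$ for an integer relation $\sum \avec_i = \bvec_j$ --- will come up empty, whether you do it by hand or by calling \texttt{ComplexPsdMinimality1} (which only feeds in actual trinomial minors). The pentagon argument happened to succeed with trinomial minors alone, but that is special to $n=5$; you should not expect the same mechanism to work here just by taking more of them.

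What the paper actually does is pass to trinomials lying in the \emph{ideal} generated by the $4$-minors that are not themselves $4$-minors. Concretely, it fixes the $6 \times 6$ submatrix of $S_P(\x)$ coming from a boundary path of six vertices and six facets (this submatrix is the same for every $n \geq 7$, which handles your uniformity concern for free), scales it, and then computes --- in the full $4$-minor ideal of this scaled matrix --- a list of eight trinomials. Six of their leading exponents multiply to give the middle exponent of another, and Proposition~\ref{prop:incompatible} applies. The crucial step you are missing is the willingness to look beyond the catalogued trinomial minors and into the ideal they generate; without that, no combination works.
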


\begin{proof}
Let $P$ be an $n$-gon for $n \geq 7$.  It is not hard to see from the classification of trinomial 4-minors in Proposition \ref{prop:2Dtrinomials} that these alone do not suffice to produce an obstruction to complex psd-minimality. However, the submatrix of $S_P(\x)$ obtained from a path of six vertices and six facets is
\[ M = \begin{bmatrix}
    0 & 0 & x_{13} & x_{14} & x_{15} & x_{16} \\
    x_{21} & 0 & 0 & x_{24} & x_{25} & x_{26} \\
    x_{31} & x_{32} & 0 & 0 & x_{35} & x_{36} \\
    x_{41} & x_{42} & x_{43} & 0 & 0 & x_{46} \\
    x_{51} & x_{52} & x_{53} & x_{54} & 0 & 0 \\
    x_{61} & x_{62} & x_{63} & x_{64} & x_{65} & 0
  \end{bmatrix} \]
  and an appropriate scaling of its rows and columns yields
\[ M' = \begin{bmatrix}
    0 & 0 & 1 & x_{14} & x_{15} & 1 \\
    1 & 0 & 0 & 1 & x_{25} & x_{26} \\
    x_{31} & 1 & 0 & 0 & 1 & x_{36} \\
    1 & x_{42} & 1 & 0 & 0 & x_{46} \\
    x_{51} & 1 & x_{53} & 1 & 0 & 0 \\
    x_{61} & x_{62} & x_{63} & x_{64} & 1 & 0
  \end{bmatrix} .\]

 Even after this scaling, the trinomial 4-minors of $M'$ are not sufficient to directly produce an obstruction. Consider instead the following trinomials and their canonical Laurent forms (as in Proposition \ref{prop:incompatible}):
\[ \begin{array}{l|l}
    x_{25} - x_{42}x_{53} + 1 & x_{25}-x_{42}x_{53} + 1 \\
  x_{46} - x_{15}x_{26} + x_{25} & x_{46}/x_{25} - x_{15}x_{26}/x_{25} + 1 \\
   x_{42}x_{51}-1+x_{15} & x_{42}x_{51} / x_{15} - 1 / x_{15}+1 \\
    x_{15} - x_{14}x_{25}+x_{42} & x_{15}/ x_{42} - x_{14}x_{25} / x_{42} + 1 \\
    x_{26}-x_{25}x_{36}+x_{46}x_{53} & x_{26}/ x_{46}x_{53} - x_{25}x_{36}/ x_{46}x_{53} + 1 \\
   x_{46}x_{53}x_{62}-x_{46}x_{63}+x_{26} & x_{46}x_{53}x_{62}/x_{26}-x_{46}x_{63}/x_{26} + 1 \\
   x_{14}-x_{31}x_{42}+1 & x_{14}-x_{31}x_{42}+1 \\
   x_{46}x_{61} - x_{46}x_{51}x_{62} + 1 & x_{46}x_{61} - x_{46}x_{51}x_{62} + 1.
 \end{array} \]
All the trinomials listed belong to the ideal generated by all 4-minors\footnote{See \texttt{AdditionalCalculations1.ipynb} in \href{https://sites.google.com/view/jctorres}{https://sites.google.com/view/jctorres}}.
The product of the initial positive terms of the first six Laurent trinomials is $x_{46}x_{51}x_{62}$ which is exactly the negative term of the last Laurent trinomial, so the result follows from Proposition \ref{prop:incompatible}. Note that the seventh trinomial was not necessary here, but will be used in a later proof.
\end{proof}

It remains to consider the case of hexagons. In this case we will go beyond the usual consideration of combinatorial type, and identify the precise class of hexagons embedded in $\RR^2$ that are complex psd-minimal. This result will be needed in the following section in order to show that large classes of (combinatorial) 3-polytopes are not complex psd-minimal.

\begin{definition}[Pappus Hexagon]
A hexagon with consecutive vertices $\mathbf{v}_1,\ldots$, $\mathbf{v}_6$ is called a \emph{Pappus hexagon} if the following conditions hold:
\begin{itemize}
\item The lines $\overleftrightarrow{\mathbf{v}_1\mathbf{v}_2}$, $\overleftrightarrow{\mathbf{v}_3\mathbf{v}_6}$ and $\overleftrightarrow{\mathbf{v}_4\mathbf{v}_5}$ are concurrent.
\item The lines $\overleftrightarrow{\mathbf{v}_2\mathbf{v}_3}$, $\overleftrightarrow{\mathbf{v}_1\mathbf{v}_4}$ and $\overleftrightarrow{\mathbf{v}_5\mathbf{v}_6}$ are concurrent.
\item The lines $\overleftrightarrow{\mathbf{v}_1\mathbf{v}_6}$, $\overleftrightarrow{\mathbf{v}_2\mathbf{v}_5}$ and $\overleftrightarrow{\mathbf{v}_3\mathbf{v}_4}$ are concurrent.
\end{itemize}
\end{definition}

Here \textit{concurrent} means that the lines intersect at a single point or are parallel (intersect at infinity). We use this name because the dual version of Pappus' Theorem in projective geometry states that if a hexagon satisfies two of the three conditions, then it also satisfies the third. There is a related notion of Desarguian hexagon which plays a role in the theory of linear extension complexity \cite{P16}.

\begin{proposition} \label{prop:Pappus} A hexagon is complex psd-minimal if and only if it is a Pappus hexagon.
\end{proposition}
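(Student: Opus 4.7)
The plan is to prove both directions of the biconditional, after first translating the Pappus conditions into algebraic equations on the slack matrix. Applying the identity $(a\times b)\cdot(c\times d)=(a\cdot c)(b\cdot d)-(a\cdot d)(b\cdot c)$ to the projective coordinates of $\mathbf{v}_3, \mathbf{v}_6$ and the coefficient vectors of the facet-lines $F_1, F_4$, the concurrency of $\overleftrightarrow{\mathbf{v}_1\mathbf{v}_2}$, $\overleftrightarrow{\mathbf{v}_4\mathbf{v}_5}$, $\overleftrightarrow{\mathbf{v}_3\mathbf{v}_6}$ is equivalent to $S_{13}S_{46}-S_{16}S_{43}=0$, and similarly the other two concurrencies correspond to the vanishing of specific $2\times 2$ minors. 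After scaling $11$ entries on a spanning tree of the non-incidence graph to $1$, these three conditions become the monomial equations $x_{46}=1$, $x_{51}=1$, $x_{62}=1$ on the remaining variables of the scaled symbolic slack matrix $M'_{\mathrm{hex}}$. Moreover, a direct $4$-minor calculation of the hexagonal slack matrix shows that the binomial $x_{46}x_{51}x_{62}-1$ lies in the slack ideal, encoding the ``two out of three implies the third'' part of Pappus' theorem as a consequence of the rank-$3$ identity.

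For the forward direction (Pappus implies complex psd-minimal), I would construct a rank-$3$ complex matrix $A$ with $|A_{ij}|^2=S_{ij}$ for each Pappus hexagon. Pappus hexagons form a two-parameter family modulo projective equivalence and the regular hexagon is complex psd-minimal by \cite{GGS17}, so a natural plan is to write a parametric formula for the factorization. The key geometric input is that each of the three Pappus conditions forces a $2\times 2$ submatrix of $S$ to have rank $1$; these real rank-$1$ submatrices can be lifted to complex rank-$1$ submatrices of $A$ by appropriate phase choices, and one then checks that the resulting $6\times 6$ matrix has rank exactly $3$ uniformly over the entire family.

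For the backward direction (non-Pappus implies not complex psd-minimal), I would apply the obstructions of Section~\ref{sec:background}. Proposition~\ref{prop:to} yields, for each trinomial in the minor ideal, a phase constraint on $\zeta$; combined with the universal binomial $x_{46}x_{51}x_{62}=1$ and with further trinomial $4$-minors (some of which are the hexagonal analogues of trinomials used in the proof of Proposition~\ref{prop:ngons}, transformed by the extra zero at position $(6,1)$), these constraints should be inconsistent unless the hexagon is Pappus. I expect the main obstacle to be precisely this step: the combinatorial form of Proposition~\ref{prop:incompatible} cannot on its own distinguish Pappus from non-Pappus hexagons, since the slack ideal depends only on the combinatorial type, so extracting the Pappus condition likely requires combining the universal phase constraints of Proposition~\ref{prop:to} with the realization-specific magnitudes $\alpha_{ij}$, possibly through a refined lattice or phase-argument analysis tailored to the failing Pappus condition.
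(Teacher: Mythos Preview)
Your overall structure is right, and your translation of the Pappus conditions into the equations $\alpha_{46}=\alpha_{51}=\alpha_{62}=1$ on the scaled slack matrix matches the paper's setup. You also correctly diagnose the central difficulty in the ``complex psd-minimal $\Rightarrow$ Pappus'' direction: the purely combinatorial obstruction of Proposition~\ref{prop:incompatible} depends only on the combinatorial type and so cannot separate Pappus from non-Pappus hexagons.

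Where your proposal has a genuine gap is precisely there: you anticipate needing to mix phase constraints with realization-specific data but do not have a concrete mechanism, and neither a ``refined lattice'' nor a generic phase argument will produce one. The paper's key idea is different. From two trinomials in the slack ideal, namely $x_{42}x_{51}-1+x_{15}$ and $x_{15}-x_{14}x_{25}+x_{42}$, one eliminates $x_{15}$ to obtain an \emph{equation} satisfied by $\boldsymbol{\zeta}$ itself (not merely a phase constraint):
\[
\zeta_{42}-\zeta_{14}\zeta_{25}=\zeta_{42}\zeta_{51}-1.
\]
Separately, Corollary~\ref{cor:to} applied to these and to the trinomials $x_{25}-x_{42}x_{53}+1$ and $x_{14}-x_{31}x_{42}+1$ forces $\zeta_{51}$ to be real and $\zeta_{14}\zeta_{25}$ to be real. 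Taking the imaginary part of the displayed equation then gives $\im(\zeta_{42})=\zeta_{51}\,\im(\zeta_{42})$; one checks that $\im(\zeta_{42})\neq 0$, so $\zeta_{51}=1$ and hence $\alpha_{51}=1$, which is one Pappus concurrency (the others follow by symmetry). The trick you are missing is to use the trinomial relations on $\boldsymbol{\zeta}$ \emph{as equations} and split into real and imaginary parts, after first using Corollary~\ref{cor:to} to determine which pieces are real.

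For the converse direction your plan is sound. The paper executes it by placing the Pappus hexagon in a projective normal form depending on two real parameters $a,b$, writing the scaled slack matrix $S_{a,b}$ explicitly, and exhibiting an explicit rank-$3$ complex matrix $M_{\xi_1,\xi_2}$ with $S_{a,b}=M_{\xi_1,\xi_2}\odot\overline{M_{\xi_1,\xi_2}}$, where $\xi_1=\pm\sqrt{a}\,i$ and $\xi_2$ is a specific algebraic function of $a,b$. Your rank-$1$ submatrix heuristic is reasonable intuition for finding such a matrix, but in the end one still has to write down the formula and verify the rank and Hadamard-square conditions.
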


\begin{proof}
  Let $N$ be the slack matrix of a hexagon. Then $N$ is equal to the matrix $M$ obtained above from a path of vertices and facets in an $n$-gon for $n \geq 7$ except that the variable $x_{61}$ is replaced by zero, since the last facet in the path is now incident to the first vertex.  Let $N'$ be obtained from $N$ by applying the same row and column scaling used to obtain $M'$ from $M$. Then the ideal $I$ of 4-minors of $N'$ contains all of the same trinomials that appeared in the proof of Proposition \ref{prop:ngons} except for the last one that involves $x_{61}$.

  Suppose the hexagon is complex psd-minimal, and $\boldsymbol{\alpha}$ and $\boldsymbol{\zeta}$ belong to the variety of $I$ with $\boldsymbol{\zeta}$ being a coordinate-wise square root of $\boldsymbol{\alpha}$ as above. By combining the third and fourth trinomials in the list, we see that
  \begin{equation}\zeta_{42} - \zeta_{14}\zeta_{25} = \zeta_{42}\zeta_{51}-1. \label{eq:Pappus} \end{equation}
  By applying Corollary \ref{cor:to} to the same two trinomials, we obtain that $\zeta_{42}\zeta_{51}/\zeta_{15}$ and $\zeta_{15}/\zeta_{42}$ are imaginary, so their product $\zeta_{51}$ is real. By applying the same corollary to the first and seventh trinomials in the list, we also obtain that $\zeta_{25}$ and $\zeta_{14}$ are imaginary, so their product $\zeta_{14}\zeta_{25}$ is real. Thus we can take the imaginary part of (\ref{eq:Pappus}) to obtain
  \[ \im(\zeta_{42}) = \zeta_{51}\im(\zeta_{42}). \]
  Reader can easily check that if $\zeta_{42}$ is real, then $\zeta_{51}$ must be equal to zero, which is a contradiction. Thus $\im(\zeta_{42})\neq 0$, and so $\zeta_{51} = 1$.

  Now $\zeta_{51} = 1$ implies that $\alpha_{51} = 1$ as well. By the chosen scaling, we also have $\alpha_{54} = \alpha_{21} = \alpha_{24} =1$. Since $S_P(\boldsymbol{\alpha})$ is a scaled slack matrix of $P$, there is some rescaling of the columns that makes it a true slack matrix, and by rescaling rows we can think of entry $(i,j)$ as the distance $\textup{dist}_{l_i}(\vvec_j)$ between vertex $\vvec_j$ and the line $l_i$ that is the affine span of facet $F_i$. Since scaling rows and columns preserves minors we have the following ratio equality
  $$ \frac{\textup{dist}_{l_5}(\vvec_1)}{\textup{dist}_{l_5}(\vvec_4)}=\frac{\textup{dist}_{l_2}(\vvec_1)}{\textup{dist}_{l_2}(\vvec_4)}.$$
  By a similarity of triangles argument this can be seen to be equivalent to $l_2, l_5$ and the line spanned by the vertices $\vvec_1$ and $\vvec_4$ being either parallel (if the ratios are one) or concurrent in a point.
%
%

%

  Conversely, let $H$ be a Pappus hexagon with vertices consecutively labeled $\vvec_1$, $\vvec_2$, $\dots$, $\vvec_6$ as above. Up to projective transformation, we may assume that
  \[ \vvec_1 = (0,0), \; \vvec_2 = (1,0), \; \vvec_5 = (1,1), \; \vvec_6 = (0,1).\]
 By concurrency the line $\overleftrightarrow{\vvec_3 \vvec_4}$ is vertical; let this line be $x=1+b$. Again by concurrency, let $(1+a,1)$ be the common intersection point of $\overleftrightarrow{\vvec_2\vvec_3}$, $\overleftrightarrow{\vvec_1\vvec_4}$ and $\overleftrightarrow{\vvec_5\vvec_6}$. Then
 \[ \vvec_3 = (1+b, b/a), \; \vvec_4 = \left( 1+b, (1+b)/(1+a) \right) \]
 so $H$ is as shown in Figure \ref{fig:Pappus}

 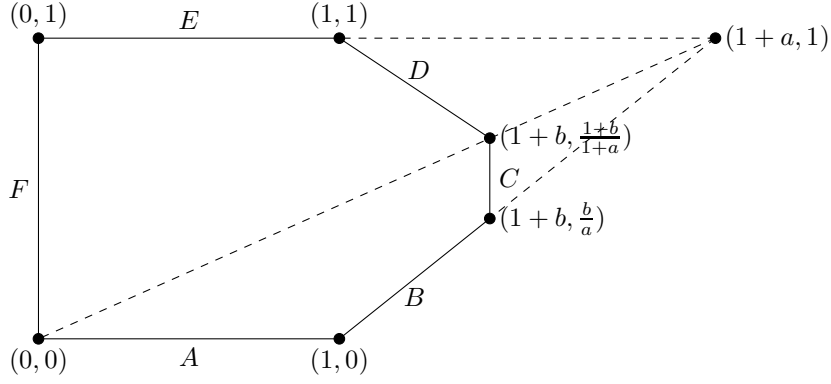
\begin{figure}[ht]
  \begin{center}
 \begin{tikzpicture}
\filldraw (0,0) node[below] {$(0,0)$} circle (2pt);
\filldraw (4,0) node[below] {$(1,0)$} circle (2pt);
\filldraw (6,1.6) node[right] {$(1+b,\frac{b}{a})$} circle (2pt);
\filldraw (6,2.67) node[right] {$(1+b,\frac{1+b}{1+a})$} circle (2pt);
\filldraw (4,4) node[above] {$(1,1)$} circle (2pt);
\filldraw (0,4) node[above] {$(0,1)$} circle (2pt);
\filldraw (9,4) node[right] {$(1+a,1)$} circle (2pt);
\draw[dashed] (9,4) -- (4,4);
\draw[dashed] (9,4) -- (0,0);
\draw[dashed] (9,4) -- (6,1.6);
\draw (0,0) -- (4,0) node[below, midway] {$A$};
\draw (4,0) -- (6,1.6) node[below, midway] {$B$};
\draw (6,1.6) -- (6,2.67) node[right, midway] {$C$};
\draw (6,2.67) -- (4,4) node[above, midway] {$\ D$};
\draw (4,4) -- (0,4) node[above, midway] {$E$};
\draw (0,0) -- (0,4) node[left, midway] {$F$};
 \end{tikzpicture}

  \end{center}
  \caption{A Pappus hexagon up to projective transformation} \label{fig:Pappus}
 \end{figure}

 and a scaled slack matrix of $H$ is\footnote{See \texttt{AdditionalCalculations2.ipynb} in \href{https://sites.google.com/view/jctorres}{https://sites.google.com/view/jctorres}}
 \[S_{a,b}:=\begin{bmatrix}
0 & 0 & 1 & \frac{1}{a} & \frac{a-b}{a(1+b)} & 1\\
1 & 0 & 0 & 1 & a & \frac{a(1+a)(1+b)}{a-b}\\
\frac{1+b}{b} & 1 & 0 & 0 & 1 & \frac{a(1+b)^2}{b(a-b)}\\
1 & \frac{(1+a)b}{a(1+b)} & 1 & 0 & 0 & 1\\
1 & 1 & \frac{a(1+b)}{b} & 1 & 0 & 0\\
0 & 1 & \frac{a^2(1+b)^2}{b(a-b)} & \frac{(1+a)(1+b)}{a-b} & 1 & 0
 \end{bmatrix}.\]
Now let \[M_{\xi_1,\xi_2}:=\begin{bmatrix}
0 & 0 & 1 & \frac{1}{\xi_1} & \xi_2 & 1\\
1 & 0 & 0 & 1 & \xi_1 & \frac{1+\xi_1}{\xi_2}\\
\frac{1+\xi_1}{\xi_1(1-\xi_2)} & 1 & 0 & 0 & 1 & \frac{1+\xi_1}{\xi_1\xi_2(1-\xi_2)}\\
1 & 1-\xi_2 & 1 & 0 & 0 & 1\\
1 & 1 & \frac{1+\xi_1}{1-\xi_2} & 1 & 0 & 0\\
0 & 1 & \frac{1+\xi_1}{\xi_2(1-\xi_2)} & \frac{1+\xi_1}{\xi_1\xi_2} & 1 & 0
\end{bmatrix}\]
where $\xi_1=\pm\sqrt{a}i$ and $\xi_2=\frac{a-b}{a(1+b)}\pm\frac{\sqrt{(1+a)b(a-b)}}{a(1+b)}i$ (each of the four possibilities work). We see that $S_{a,b}=M_{\xi_1,\xi_2}\odot\overline{M_{\xi_1,\xi_2}}$. The rank of $M_{\xi_1,\xi_2}$ is at least three since it has the zero pattern of the slack matrix of a hexagon, and we can check computationally that all of its 4-minors vanish\footnote{See \texttt{AdditionalCalculations2.ipynb} in \href{https://sites.google.com/view/jctorres}{https://sites.google.com/view/jctorres}}. Thus its rank is exactly three, so $H$ is complex psd-minimal.
\end{proof}

This implies that we can represent any Pappus hexagon as the projection of a complex $3 \times 3$ spectrahedral lift. This lift can in fact be derived explicitly from the matrix $S_P(\boldsymbol{\zeta})$ that we constructed.

\begin{example}
The regular hexagon $H$ of vertices $\{(\cos(\frac{k\pi}{3}),\sin(\frac{k\pi}{3})), k=0,...,5\}$ can be written as the set if all $(x_1,x_2) \in \mathbb{R}^2$ such that we can find real numbers $x_3, x_4, x_5$ for which
$$
\begin{bmatrix}
 2-\frac{4}{\sqrt{3}}{x_2} & 1+{x_3}+i {x_4}-{x_5} & 1+{x_3}+i {x_4}+{x_5} \\
1+ {x_3}-i {x_4}-{x_5} & 1+{x_1}+\sqrt{3} {x_2}+2 {x_3}-2 {x_5} & 1+{x_3}-i
   {x_5} \\
1+ {x_3}-i {x_4}+{x_5} & 1+{x_3}+i {x_5} &1 -{x_1}-\frac{1}{\sqrt{3}}{x_2} \\
\end{bmatrix}
 \succeq 0.$$
A step by step derivation of a $3 \times 3$ lift from the $S_P(\boldsymbol{\zeta})$ above, from which this Example is a simple variation, can be found in Chapter 4.2 of \cite{torres2020slack}.
\end{example}

\section{3-polytopes} \label{sec:3D}
The obstructions we described in Section \ref{sec:background} can be applied to polytopes in any dimension. The following result, whose proof is virtually the same as in the real case (see \cite[Proposition 3.8]{GRT13} limits the search for complex psd-minimal polytopes in dimensions three and beyond.

\begin{proposition}\label{prop:faces}
If $P$ is a complex psd-minimal polytope, then all of its faces are also complex psd-minimal.
\end{proposition}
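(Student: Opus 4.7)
The plan is to invoke the rank-$(d+1)$ characterization of complex psd-minimality from the proposition preceding Example~\ref{ex:squareslack} and to reduce the statement to the case of a single facet. Since every proper face of $P$ is obtained by iteratively passing to a facet, it suffices to show that if $P$ is a complex psd-minimal $d$-polytope and $F$ is a facet of $P$, then $F$ is itself complex psd-minimal.

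Suppose such a $P$ admits a complex matrix $A$ of rank $d+1$ with $|A_{ij}|^2=(S_P)_{ij}$. Factor $A=U^\ast V$ with $U$ and $V$ having exactly $d+1$ rows, and let $u_i$ be the $i$-th column of $U$ and $v_j$ the $j$-th column of $V$, so $A_{ij}=\langle u_i,v_j\rangle$. After relabeling we may assume $F=F_1$; let $J$ be the set of indices of the vertices of $F$. Every entry $(S_P)_{1j}$ with $j\in J$ vanishes, so $\langle u_1,v_j\rangle=0$, and hence $v_j\in u_1^\perp$ for every $j\in J$. Replacing each $u_i$ with $i\ge 2$ by its orthogonal projection onto $u_1^\perp$ leaves $\langle u_i,v_j\rangle$ unchanged for $j\in J$, and now expresses the submatrix $A[\{2,\dots,m\},J]$ as a product of two matrices whose columns all live in the $d$-dimensional space $u_1^\perp$. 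Consequently, this submatrix has rank at most $d$.

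The corresponding submatrix $S_P[\{2,\dots,m\},J]$ is, up to a positive diagonal row scaling, a copy of the slack matrix $S_F$ in which each row of $S_F$ may be duplicated, because several facets of $P$ can meet $F$ in the same facet of $F$. Selecting one representative facet of $P$ for each facet of $F$, keeping only the corresponding rows of $A[\{2,\dots,m\},J]$, and rescaling those rows by the appropriate positive scalars produces a complex matrix $A_F$ with $|(A_F)_{ij}|^2=(S_F)_{ij}$ and $\rank(A_F)\le d$. The reverse inequality $\rank(A_F)\ge \dim F+1=d$ is supplied by Proposition~\ref{prop:rank} applied to $F$, so equality holds and $F$ is complex psd-minimal.

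The delicate step—more a matter of careful bookkeeping than mathematical subtlety—is the treatment of duplicate rows in $S_P[\{2,\dots,m\},J]$. One must check that distinct facets of $P$ meeting $F$ in a common face really do produce rows of $S_P[\{2,\dots,m\},J]$ that are positive scalar multiples of a single row of $S_F$, and that the ensuing row selection and rescaling of $A[\{2,\dots,m\},J]$ preserves both the zero pattern and the rank bound. Beyond this, the argument is essentially a verbatim complex transcription of the real case treated in \cite[Proposition~3.8]{GRT13}.
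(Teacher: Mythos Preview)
Your argument is correct and is exactly the adaptation of the real case that the paper invokes (the paper gives no proof of its own beyond citing \cite[Proposition~3.8]{GRT13}). One small inaccuracy in the write-up that you should clean up: the full submatrix $S_P[\{2,\dots,m\},J]$ is not a row-scaled copy of $S_F$ with possible duplicate rows---rows coming from facets of $P$ that are \emph{not} adjacent to $F$ need not be scalar multiples of any single row of $S_F$, and no two facets of $P$ can meet $F$ in the same facet of $F$ because each ridge of $P$ lies in exactly two facets---but this does not affect the proof, since you only use the (uniquely determined) rows coming from facets adjacent to $F$, and those are indeed positive scalings of the rows of $S_F$.
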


We will focus on the three-dimensional case, for which we have the following specialization.

\begin{proposition} \label{prop:346}
If $P$ is a complex psd-minimal 3-polytope, then all of its facets are triangles, quadrilaterals or hexagons, and all of its vertices have degree three, four or six.
\end{proposition}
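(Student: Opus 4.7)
The plan is to combine Proposition \ref{prop:faces} with the classification of complex psd-minimal polygons carried out in Section \ref{sec:2D}, and then to pass to the dual polytope in order to handle vertex degrees.

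For the facet statement, every facet of a $3$-polytope $P$ is a polygon, and by Proposition \ref{prop:faces} such a facet must itself be complex psd-minimal. The work in Section \ref{sec:2D} shows that pentagons are not complex psd-minimal, and Proposition \ref{prop:ngons} shows that no $n$-gon with $n\geq 7$ is complex psd-minimal. Consequently each facet of $P$ is combinatorially a triangle, quadrilateral, or hexagon. The finer geometric restriction in Proposition \ref{prop:Pappus} (only Pappus hexagons appear) is not needed here, since the present statement is purely combinatorial.

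For the vertex statement, I would invoke polar duality. Up to positive row and column scalings the slack matrix of the dual polytope $P^*$ is $S_P^T$. The characterization of complex psd-minimality recalled in Section \ref{sec:background}---the existence of a complex matrix $A$ of rank $d+1$ with $S = A\odot\overline{A}$---is manifestly symmetric under transposition, because $A^T\odot \overline{A^T} = (A\odot\overline{A})^T$ and $\rank(A^T)=\rank(A)$. Hence $P^*$ is again a complex psd-minimal $3$-polytope, and applying the facet conclusion to $P^*$ shows that every facet of $P^*$ is a triangle, quadrilateral, or hexagon. Since the facet of $P^*$ corresponding to a vertex $\vvec$ of $P$ has exactly $\deg(\vvec)$ vertices, this translates into the statement that every vertex of $P$ has degree $3$, $4$, or $6$.

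There is no genuine obstacle in the argument: the substantive content was already produced by the polygon classification in Section \ref{sec:2D} and by Proposition \ref{prop:faces}. The only additional ingredient is the row/column symmetry of the complex psd-minimality criterion, which is the same remark that makes the analogous result hold in the real case.
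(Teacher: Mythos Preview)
Your argument is correct and follows essentially the same route as the paper: the facet statement comes from Proposition~\ref{prop:faces} together with the polygon classification of Section~\ref{sec:2D}, and the vertex statement is obtained by applying the facet statement to the polar dual. The only difference is that you spell out explicitly why complex psd-minimality is preserved under duality via the transpose symmetry of the criterion, whereas the paper simply invokes this fact.
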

\begin{proof}
The first statement follows immediately from Proposition \ref{prop:faces} and the classification of combinatorial complex psd-minimal polygons in the previous section. The second then follows from the first and the fact that complex psd-minimality is preserved under duality.
\end{proof}

To apply our method systematically, we must first identify combinatorial configurations that yield binomials or trinomials in the slack ideal, as we did in Proposition \ref{prop:2Dtrinomials} for the two-dimensional case.

\begin{proposition} \label{prop:3d-trinomials}
  Let $P$ be a 3-polytope and suppose that there exists a quadrilateral facet $F_4$ incident to a vertex $\vvec_4$ of degree four. Let $\vvec_1$, $\vvec_2$, and $\vvec_3$ be the other vertices incident to $F_4$ and $F_1$, $F_2$, and $F_3$ be the other facets incident to $\vvec_4$, arranged as in Figure \ref{fig:3d-minors} (A).  Then
  \[ g = yz \left( x_{12}x_{23}x_{31} - x_{13}x_{22}x_{31} + x_{13}x_{21}x_{32} \right) \]
  is a trinomial 5-minor of $S_P(\x)$ for some variables $y, z$.

  Thus $x_{12}x_{23}x_{31} - x_{13}x_{22}x_{31} + x_{13}x_{21}x_{32}$ belongs to the slack ideal of $P$.
\end{proposition}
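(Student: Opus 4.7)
The plan is to exhibit an explicit $5\times 5$ submatrix of $S_P(\x)$ whose determinant factors as a product $yz\cdot\Delta$, where $y$ and $z$ are individual entries and $\Delta$ is the claimed trinomial. I will take rows $F_1,F_2,F_3,F_4,F_5$ and columns $\vvec_1,\vvec_2,\vvec_3,\vvec_4,\vvec_5$, where $F_5$ is any facet not incident to $\vvec_4$ and $\vvec_5$ is any vertex not on $F_4$. Such $F_5$ and $\vvec_5$ must exist: since $P$ has a quadrilateral face it cannot be a tetrahedron, so it has at least five facets and at least five vertices, while $\vvec_4$ lies on only the four facets $F_1,\dots,F_4$ and $F_4$ contains only the four vertices $\vvec_1,\dots,\vvec_4$.

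The key step is to exploit the zero pattern of this $5\times 5$ block. Row $F_4$ has zeros in columns $\vvec_1,\dots,\vvec_4$ (all four lie on $F_4$), so its only possibly-nonzero entry in the submatrix is $z:=(F_4,\vvec_5)$; dually, column $\vvec_4$ has zeros in rows $F_1,\dots,F_4$ (all four facets contain $\vvec_4$), so its only possibly-nonzero entry is $y:=(F_5,\vvec_4)$. In the Leibniz expansion of the determinant, any permutation $\sigma$ giving a nonzero contribution must therefore satisfy $\sigma(4)=5$ and $\sigma(5)=4$. Thus the $5$-minor equals $\pm yz\cdot\det\bigl[(F_i,\vvec_j)\bigr]_{1\le i,j\le 3}$.

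Next I read off the zero pattern of the inner $3\times 3$ block from the incidence structure of Figure~\ref{fig:3d-minors}(A). Because any two distinct facets of a $3$-polytope meet in $\emptyset$, a single vertex, or a single edge, the configuration at $\vvec_4$ forces $F_1\cap F_4=\{\vvec_4,\vvec_1\}$, $F_2\cap F_4=\{\vvec_4\}$, and $F_3\cap F_4=\{\vvec_4,\vvec_3\}$. This gives exactly two zeros, namely $(F_1,\vvec_1)=0$ and $(F_3,\vvec_3)=0$, while the remaining seven entries correspond to non-incidences and so are distinct nonzero variables in the symbolic slack matrix. A direct expansion along the first row of
\[\det\begin{pmatrix} 0 & x_{12} & x_{13}\\ x_{21} & x_{22} & x_{23}\\ x_{31} & x_{32} & 0\end{pmatrix}\]
then yields precisely $x_{12}x_{23}x_{31}-x_{13}x_{22}x_{31}+x_{13}x_{21}x_{32}$, so $g=yz\cdot\Delta$ is a trinomial $5$-minor as asserted.

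Finally, since the slack ideal is the saturation of the minor ideal with respect to the product of all the $x_{ij}$, and $g$ lies in the ideal generated by the $5$-minors, dividing out the monomial $yz$ places $\Delta$ itself inside the slack ideal. The main thing to get right is the combinatorial check that these two incidence relations $(F_1,\vvec_1)=0$ and $(F_3,\vvec_3)=0$ are the \emph{only} zeros of the inner block: an extra zero would collapse $\Delta$ to a binomial or monomial, while a missing zero would introduce additional terms, and both possibilities are ruled out by the hypothesis that $F_4$ is a quadrilateral and $\vvec_4$ has degree four combined with the facet-intersection rule for $3$-polytopes.
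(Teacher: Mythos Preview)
Your proof is correct and follows essentially the same approach as the paper: both select the $5\times 5$ submatrix on rows $F_1,\dots,F_4$ plus one extra facet and columns $\vvec_1,\dots,\vvec_4$ plus one extra vertex, then observe that the all-zero row $F_4$ and column $\vvec_4$ (except for the entries $y,z$) force the determinant to be $\pm yz$ times the $3\times 3$ minor with the stated trinomial expansion. Your write-up is in fact more careful than the paper's, supplying the combinatorial justification for the exact zero pattern of the inner block and for the existence of the extra facet and vertex.
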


\begin{proof}
  The submatrix of $S_P(\x)$ with rows indexed by $F_1, \dots, F_4$ and any other single facet and columns indexed by $\vvec_1, \dots, \vvec_4$ and any other single vertex is of the form
  $ \begin{bmatrix}
    0 & x_{12} & x_{13} & 0 & \ast \\
    x_{21} & x_{22} & x_{23} & 0 & \ast \\
    x_{31} & x_{32} & 0 & 0 & \ast \\
    0 & 0 & 0 & 0 & y \\
    \ast & \ast & \ast & z & \ast
  \end{bmatrix} $
  (where $\ast$ may denote either zero or a variable) and its determinant is exactly $g$.
\end{proof}
\begin{figure}[ht]
  \begin{subfigure}[c]{.45\linewidth}
    \begin{center}
    \begin{tikzpicture}[scale=0.4]
\filldraw (0,0) node[below left] {$\mathbf{v}_4$} circle (2pt);
\filldraw (4,0) node[below right] {$\mathbf{v}_3$} circle (2pt);
\filldraw (4,4) node[above right] {$\mathbf{v}_2$} circle (2pt);
\filldraw (0,4) node[above left] {$\mathbf{v}_1$} circle (2pt);
\draw (0,0) -- (0,4) -- (4,4) -- (4,0) -- (0,0);
\draw (0,0) -- (0,-2);
\draw (0,0) -- (-2,0);
\node[] at (-1.5,2) {$F_1$};
\node[] at (-1.5,-1.5) {$F_2$};
\node[] at (2,-1.5) {$F_3$};
\node[] at (2,2) {$F_4$};
    \end{tikzpicture}
    \end{center}
    \caption{Trinomial 5-minors \label{fig:trinomial-minors}}
  \end{subfigure}
  \begin{subfigure}[c]{.45\linewidth}
    \begin{center}
    \begin{tikzpicture}[scale=0.4]
\filldraw (0,0) node[below left] {$\mathbf{v}_4$} circle (2pt);
\filldraw (4,0) node[below right] {$\mathbf{v}_3$} circle (2pt);
\filldraw (4,4) node[above right] {$\mathbf{v}_2$} circle (2pt);
\filldraw (0,4) node[above left] {$\mathbf{v}_1$} circle (2pt);
\draw (0,0) -- (0,4) -- (4,4) -- (4,0) -- (0,0);
\node[] at (-0.5,2) {$F_1$};
\node[] at (2,-0.5) {$F_3$};
\node[] at (2,2) {$F_4$};
\node[] at (2,4.5) {$F_6$};
\node[] at (4.5,2) {$F_5$};
    \end{tikzpicture}
    \end{center}
    \caption{Binomial 5-minors \label{fig:binomial-minors}}
  \end{subfigure}
\caption{Common configurations that yield trinomial and binomial 5-minors \label{fig:3d-minors}}
\end{figure}
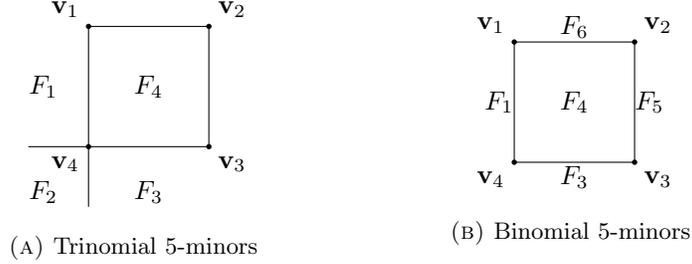
A quadrilateral facet alone (or dually, a vertex of degree four alone) yields a binomial 5-minor. We choose labels to be consistent with those of the previous proposition.

\begin{proposition} \label{prop:3d-binomials}
  Let $P$ be a 3-polytope and $F_4$ be a quadrilateral facet of $P$. Let $\vvec_1$, $\vvec_2$, $\vvec_3$, and $\vvec_4$ be the four vertices incident to $F_4$, and $F_1$, $F_3$, $F_5$, and $F_6$ be the facets adjacent to $F_4$, arranged as in Figure \ref{fig:3d-minors} (B). Then
  \[  h = u \left(x_{12}x_{31}x_{54}x_{63} - x_{13}x_{32}x_{51}x_{64}   \right) \]
  is a binomial 5-minor of $S_P(\x)$ for some variable $u$.

  Thus $x_{12}x_{31}x_{54}x_{63} - x_{13}x_{32}x_{51}x_{64}$ belongs to the slack ideal of $P$.
\end{proposition}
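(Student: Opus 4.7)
The plan is to mimic the construction used in Proposition \ref{prop:3d-trinomials}, just with one more facet row replacing the quadrilateral-vertex hypothesis. First I would pick any vertex $\vvec_\ast$ of $P$ not lying on $F_4$; such a vertex exists because $P$ is three-dimensional and $F_4$ is a proper face. Then I would form the $5 \times 5$ submatrix $N$ of $S_P(\x)$ with rows indexed by $F_1, F_3, F_4, F_5, F_6$ (in that order) and columns indexed by $\vvec_1, \vvec_2, \vvec_3, \vvec_4, \vvec_\ast$. Since $F_4$ is incident to all four of $\vvec_1, \dots, \vvec_4$, the row of $N$ indexed by $F_4$ has the form $(0,0,0,0,u)$, where $u$ denotes the variable at position $(F_4, \vvec_\ast)$. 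Cofactor expansion along this row gives $\det N = (-1)^{3+5} u \det N' = u \det N'$, where $N'$ is the $4 \times 4$ submatrix with rows $F_1, F_3, F_5, F_6$ and columns $\vvec_1, \vvec_2, \vvec_3, \vvec_4$.

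Next I would read off the incidences from Figure \ref{fig:3d-minors}(B). By hypothesis each of $F_1, F_3, F_5, F_6$ shares an edge with $F_4$, so each contains exactly two of $\vvec_1, \dots, \vvec_4$: specifically $F_1$ meets $\vvec_1, \vvec_4$; $F_3$ meets $\vvec_3, \vvec_4$; $F_5$ meets $\vvec_2, \vvec_3$; and $F_6$ meets $\vvec_1, \vvec_2$. This places two zeros in each row of $N'$ in a cyclic pattern around the quadrilateral. Enumerating the column permutations whose diagonal in $N'$ avoids every zero, only two permutations survive: the one pairing $(F_1, F_3, F_5, F_6)$ with $(\vvec_2, \vvec_1, \vvec_4, \vvec_3)$ and the one pairing them with $(\vvec_3, \vvec_2, \vvec_1, \vvec_4)$. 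A sign check (two transpositions versus three inversions) shows these appear with opposite signs, yielding
\[ \det N' = x_{12}x_{31}x_{54}x_{63} - x_{13}x_{32}x_{51}x_{64}, \]
and therefore $\det N = u(x_{12}x_{31}x_{54}x_{63} - x_{13}x_{32}x_{51}x_{64}) = h$, as required. The final assertion that this binomial belongs to the slack ideal is then immediate, since $u$ is a variable and the slack ideal is saturated with respect to all variables.

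I do not expect any significant obstacle. The whole argument is a routine determinant calculation, exactly parallel to the trinomial case; the only care needed is in matching the geometric picture of Figure \ref{fig:3d-minors}(B) to the correct zero pattern in $N'$ and checking that the two surviving permutations really do carry opposite signs so that the minor collapses to a genuine binomial rather than a monomial or zero.
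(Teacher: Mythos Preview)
Your proof is correct and follows essentially the same approach as the paper: choose an extra vertex off $F_4$, form the $5\times 5$ submatrix with rows $F_1,F_3,F_4,F_5,F_6$, expand along the all-zero $F_4$ row, and observe that the remaining $4\times 4$ block is the cyclic slack matrix of a quadrilateral with exactly two nonvanishing permutation terms of opposite sign. Indeed, the paper's displayed matrix appears to carry a copy-paste typo in its last two rows from the preceding proposition, so your write-up is in fact the cleaner of the two.
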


\begin{proof}
  The submatrix of $S_P(\x)$ with rows indexed by $F_1$, $F_3$, $F_5$, $F_6$, and $F_4$ (in that order) and columns indexed by $\vvec_1, \dots, \vvec_4$ and any other single vertex is of the form
  $ \begin{bmatrix}
    0 & x_{12} & x_{13} & 0 & \ast \\
    x_{31} & x_{32} & 0 & 0 & \ast \\
    x_{51} & 0 & 0 & x_{54} & \ast \\
    0 & 0 & 0 & 0 & y \\
    \ast & \ast & \ast & u & \ast
  \end{bmatrix} $
(where $\ast$ may denote either zero or a variable) and its determinant is exactly $h$. The binomial results from the fact that the upper left $4 \times 4$ matrix is just the slack matrix of a quadrilateral, as in Example \ref{ex:squareslack}.
\end{proof}

Propositions \ref{prop:3d-trinomials} and Proposition \ref{prop:3d-binomials} allow us to generate most of the trinomial $5$-minors in a quicker way than going through all possible minors (there are only few  sporadic trinomial 5-minors that are not obtained from the configuration of Figure \ref{fig:3d-minors} (A)). We can then use them with Proposition \ref{prop:incompatible} directly.
This 3-dimensional specific version of our approach was also implemented in SageMath, and made available\footnote{Functions ComplexPsdMinimality1$\_$3d and ComplexPsdMinimality2$\_$3d, for $L$ and $L'$ respectively, in \texttt{ComplexPsdMinimality.ipynb} available at \href{https://sites.google.com/view/jctorres}{https://sites.google.com/view/jctorres}}.


\subsection{$3$-polytopes with a vertex of degree six}

We begin by completely characterizing the case of 3-polytopes that either have a vertex of degree six or, by duality, some hexagonal facet.

\begin{proposition} \label{prop:deg6v}
  A complex psd-minimal 3-polytope with a vertex of degree six has exactly seven vertices.
\end{proposition}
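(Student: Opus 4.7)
My approach would be to combine the combinatorial constraints from Proposition~\ref{prop:346} with the lattice-obstruction machinery of Proposition~\ref{prop:incompatible}, arguing by contradiction that any eighth vertex would violate complex psd-minimality.

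First I would label the structure at $v$: denote the six facets at $v$ as $F_1, \dots, F_6$ (in cyclic order) and the six neighbors of $v$ as $v_1, \dots, v_6$, so that each $F_i$ contains $v, v_i$, and $v_{i+1}$ (indices mod~$6$). By Proposition~\ref{prop:346} each $F_i$ is a triangle, quadrilateral, or hexagon, and $P$ automatically has the at least seven vertices $v, v_1, \dots, v_6$. Moreover, since complex psd-minimality is preserved under polar duality (as used in Proposition~\ref{prop:346}), the hexagonal facet of $P^{*}$ dual to $v$ must be a Pappus hexagon by Proposition~\ref{prop:Pappus}; translated back to $P$, this is a projective incidence condition on the six supporting hyperplanes of $F_1, \dots, F_6$ at $v$.

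Next I would extract useful relations from the slack ideal. The $6\times 6$ submatrix $M_v$ of the scaled symbolic slack matrix of $P$ with rows $F_1, \dots, F_6$ and columns $v_1, \dots, v_6$ has exactly the zero-pattern of a hexagon's symbolic slack matrix. Since the full slack matrix of a $3$-polytope has rank $4$, all $5\times 5$ minors of $M_v$ vanish, producing a rich supply of polynomial relations in the slack ideal; among these I would single out the trinomials arising from ``pentagonal'' sub-configurations, in the same spirit as the minors $f_i, g_i$ of Proposition~\ref{prop:2Dtrinomials}. Should any $F_i$ fail to be a triangle, Propositions~\ref{prop:3d-trinomials} and~\ref{prop:3d-binomials} supply further trinomials and binomials from the associated quadrilateral configurations.

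Then I would assume for contradiction that $P$ has an eighth vertex. This can happen in two (possibly overlapping) ways: either (a) some $F_i$ at $v$ is a quadrilateral or hexagon, introducing a new vertex on that facet, or (b) some vertex $w$ of $P$ lies outside $\{v, v_1, \dots, v_6\}$ and in a facet not incident to $v$. In either situation I would adjoin the relevant extra row and/or column to $M_v$, collect the new trinomial and binomial $5$-minors of the enlarged submatrix together with those already present, and feed them into Proposition~\ref{prop:incompatible}. The goal is to produce the lattice-membership certificate that rules out complex psd-minimality.

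The principal obstacle is precisely this case analysis: the trinomials and binomials available depend sensitively on the combinatorial type of each $F_i$ and on how the hypothetical extra vertex $w$ attaches to the star of $v$, and one must choose the right combination in each case so that the exponent vector of some middle term lands in the appropriate lattice $L$ or $L'$. I would expect this verification to be carried out case-by-case, likely with the aid of the authors' \texttt{ComplexPsdMinimality1\_3d} and \texttt{ComplexPsdMinimality2\_3d} routines from the accompanying SageMath notebook, rather than by a single uniform closed-form calculation.
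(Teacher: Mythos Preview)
Your outline---the labeling around $v$, the case split into ``some $F_i$ is not a triangle'' versus ``all $F_i$ are triangles,'' and the idea of adjoining the extra column for the eighth vertex---matches the paper's proof exactly, and your plan for Case~(a) is essentially what the paper does: three explicit trinomial $5$-minors of the enlarged matrix combine via Proposition~\ref{prop:incompatible} to give the obstruction.

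The genuine gap is in Case~(b), where all six facets $F_1,\dots,F_6$ are triangles and the extra vertex $v'$ lies on none of them. Here the paper does \emph{not} use the lattice criterion of Proposition~\ref{prop:incompatible} at all, and your expectation that the \texttt{ComplexPsdMinimality} routines would close this case is unfounded. The difficulty is that the $6\times 6$ block $M_v$ has exactly the zero pattern of a hexagon, and Pappus hexagons \emph{are} complex psd-minimal; so the hexagonal relations alone impose no contradiction, and the extra column for $v'$ has no zeros to generate new trinomials of the right shape. Instead the paper imports from the proof of Proposition~\ref{prop:Pappus} the conclusion that $\zeta_{51}=\zeta_{46}=\zeta_{62}=1$, then explicitly \emph{parametrizes} the remaining $\zeta_{ij}$ in the hexagon block as rational functions of real parameters $c=\cos\theta$, $s=\sin\theta$, and $A$ (so that $\zeta_{15}=c^2+ics$, $\zeta_{25}=iA$, etc.). The entries $\zeta_{37},\zeta_{47},\zeta_{57}$ in the new column are then expressed via four further real parameters, and the constraints $\alpha_{i7}=\lvert\zeta_{i7}\rvert^2$ for $i\in\{3,4,5\}$ together with $c^2+s^2=1$ are shown, by a Gr\"obner basis computation in $\mathbb{C}[c,s,A,a_{27},b_{27},a_{67},b_{67}]$, to generate the unit ideal. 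This is a qualitatively different and substantially heavier argument than the trinomial lattice test you propose, and without it Case~(b) remains open.
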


\begin{proof}
  Suppose there is a complex psd-minimal 3-polytope $P$ with at least eight vertices including a vertex $\vvec$ of degree six. Let $\vvec_1, \dots, \vvec_6$ be the neighbors of $\vvec$ and $F_1, \dots, F_6$ be the facets incident to $\vvec$, labeled so that $F_i$ is incident to $\vvec_i$ and $\vvec_{i+1}$ (with indices taken modulo 6.)

  \textbf{Case 1:} Suppose that one of the six facets, say $F_1$, is not a triangle. Then there exists a vertex $\vvec'$ incident to $F_1$ but not to any of $F_2, \dots, F_6$. After appropriate row and column scalings, the submatrix of the slack matrix whose rows are indexed by $F_1, \dots F_6,F$ and columns by $\vvec_1, \dots, \vvec_6, \vvec', \vvec$, where $F$ is a facet not containing $\vvec$, is
  \[ \hat{M} = \begin{bmatrix}
    0 & 0 & 1 & x_{14} & x_{15} & 1 & 0 & 0\\
    1 & 0 & 0 & 1 & x_{25} & x_{26} & x_{27} & 0 \\
    x_{31} & 1 & 0 & 0 & 1 & x_{36} & x_{37} & 0\\
    1 & x_{42} & 1 & 0 & 0 & x_{46} & x_{47} & 0\\
    x_{51} & 1 & x_{53} & 1 & 0 & 0 & x_{57} & 0\\
    0 & x_{62} & x_{63} & x_{64} & 1 & 0 & x_{67} & 0\\
\ast & \ast & \ast & \ast & \ast & \ast & \ast & 1
  \end{bmatrix}. \]

  Let $M$ be the submatrix obtained from $\hat{M}$ by deleting the last row and column. Among the 4-minors of $M$ (and thus 5-minors of $\hat{M}$ and the scaled slack matrix) are the trinomials\footnote{See \texttt{AdditionalCalculations3.ipynb} in \href{https://sites.google.com/view/jctorres}{https://sites.google.com/view/jctorres}}
  \[ x_{14}-x_{31}x_{42}+1, \, x_{62}x_{27}x_{31} - x_{62}x_{37} + x_{67}, \, x_{42}x_{67} - x_{62}x_{47} + x_{62}x_{27} \]
whose standard forms are
  \[ x_{14}-x_{31}x_{42}+1, \, \frac{x_{62}x_{27}x_{31}}{x_{67}}-\frac{x_{62}x_{37}}{x_{67}} + 1, \, \frac{ x_{42}x_{67}}{x_{62}x_{27}} - \frac{x_{47}}{x_{27}} + 1.\]

    Since $\left( \frac{x_{62}x_{27}x_{31}}{x_{67}} \right) \left( \frac{ x_{42}x_{67}}{x_{62}x_{27}} \right) = x_{31}x_{42}$, $P$ cannot be complex psd-minimal by Proposition \ref{prop:incompatible}.

     \textbf{Case 2:} Suppose that all six of the facets incident to $\vvec$ are triangles. Then there exists a vertex $\vvec'$ incident to none of these six facets. Consider matrices $\hat{M}$ and $M$ analogously as in Case 1, but now there is a 1 in position $(F_1,\vvec')$ instead of a 0. For the scaled slack matrix $S_P$, by hypothesis, there exist $\boldsymbol{\alpha},\boldsymbol{\zeta}$ such that $S_P(\boldsymbol{\alpha})=\rank S_P(\boldsymbol{\zeta})\odot\overline{\rank S_P(\boldsymbol{\zeta})}$ with $\rank S_P(\boldsymbol{\alpha})=\rank S_P(\boldsymbol{\zeta})=4$. Now $\rank M(\boldsymbol{\alpha})\geq 3$ by Proposition \ref{prop:rank} since it contains the scaled symbolic slack matrix of a hexagon, and therefore $\rank\hat{M}(\boldsymbol{\alpha})\geq 4$. Thus $\rank\hat{M}(\boldsymbol{\alpha})=4$ and $\rank M(\boldsymbol{\alpha})=3$. Similarly, $\rank M(\boldsymbol{\zeta})=3$.
From the proof of Proposition \ref{prop:Pappus} it follows that such solutions will also satisfy $\alpha_{46} = \zeta_{46} = \alpha_{51} = \zeta_{51} = \alpha_{62} = \zeta_{62} = 1$, so we consider the modified matrix
    \[M' = \begin{bmatrix}
0 & 0 & 1 & x_{14} & x_{15} & 1 & 1\\
1 & 0 & 0 & 1 & x_{25} & x_{26} & x_{27}\\
x_{31} & 1 & 0 & 0 & 1 & x_{36} & x_{37}\\
1 & x_{42} & 1 & 0 & 0 & 1 & x_{47}\\
1 & 1 & x_{53} & 1 & 0 & 0 & x_{57}\\
0 & 1 & x_{63} & x_{64} & 1 & 0 & x_{67}
\end{bmatrix}.\]

Our strategy will be to parametrize (many of) the coordinates of $\boldsymbol{\alpha}$ and $\boldsymbol{\zeta}$ as rational functions of a small number of real variables $\cos \theta, \sin \theta, A$, where $\theta$ is a fixed angle. Then we will use Gr\"obner bases to show that such relations are not possible, which yields a contradiction.

Now the following polynomials belong to the ideal of 4-minors of $M'$\footnote{See \texttt{AdditionalCalculations3.ipynb} in \href{https://sites.google.com/view/jctorres}{https://sites.google.com/view/jctorres}}.

\[ \begin{array}{cc} x_{15}-1+x_{42} & x_{25} - x_{15}x_{26} + 1 \\
  x_{42}x_{53}-x_{15}x_{26} &  x_{37}-x_{31}x_{27}+x_{63}-x_{67} \\
  x_{25}x_{31} - x_{53} &
  x_{47}-x_{27}-x_{42}x_{67}+x_{26}-1
  \\
  x_{53}-x_{15}x_{63} & x_{57}+x_{26}-x_{27}-x_{67} \end{array}.\]

  Let $\zeta_{15} = a+bi$. From the trinomial $x_{15}-1+x_{42}$, we obtain that $\zeta_{42} = (1-a)-bi$. Then by Proposition \ref{prop:to} we have
  \[ 0 = \re \left( \zeta_{15}\overline{\zeta_{42}} \right) = a(1-a)-b^2.\]
  We conclude that $0 \leq a \leq 1$, so for some angle $\theta$ we can write $a = \cos^2 \theta$ and $b = \cos \theta \sin \theta$. Writing $c := \cos \theta$ and $s := \sin \theta$, we can now parametrize
  \[ \zeta_{15} = c^2 + i c s, \]
  \[ \zeta_{42} = s^2 - i c s.\]
Since $\zeta_{15}$ and $\zeta_{42}$ are assumed to be nonzero, $s$ and $c$ are also nonzero so we can and will divide by these parameters.

From the trinomial $x_{25} - x_{15}x_{26} + 1$ we immediately obtain from Proposition \ref{prop:to} that $\zeta_{25}$ is pure imaginary, so we parametrize
  \[ \zeta_{25} = iA\]
  and again $A$ must be nonzero.

  From applying the same trinomial directly to $\boldsymbol{\zeta}$ we obtain that $1 = \re(\zeta_{15}\zeta_{26}) =  c^2 \re(\zeta_{26})-  c s \im(\zeta_{26})$ and that $A = \im(\zeta_{15}\zeta_{26}) = c s \re(\zeta_{26})+  c^2 \im(\zeta_{26})$. We solve to obtain the parametrization
\[ \zeta_{26} = \left(1 + \frac{A s}{c} \right) + i \left( A - \frac{s}{c}  \right).\]

From the binomial $x_{42}x_{53}-x_{15}x_{26}$ we can directly obtain the parametrization
 \[ \zeta_{53}= \left( 1 - \frac{A c}{s} \right) + i \left( A + \frac{c}{s} \right) .\]
 Then from the binomials $x_{53}-x_{15}x_{63}$ and $x_{25}x_{31} - x_{53}$ we respectively obtain
 \[\zeta_{63}= \left( 2 - \frac{A c}{s} + \frac{A s}{c} \right) + i \left( 2A + \frac{c}{s} - \frac{s}{c} \right), \]
 \[ \zeta_{31} = \left( 1 + \frac{c}{A s}\right) + i \left(\frac{c}{s} - \frac{1}{A} \right).\]

 Using that $\alpha_{ij} = \abs{\zeta_{ij}}^2$ and simplifying via the relation $c^2 + s^2 = 1$, we obtain
 \[ \begin{array}{cccc} \alpha_{15} = c^2, & \alpha_{25} = A^2, & \alpha_{26} = \frac{1+A^2}{c^2}, & \alpha_{31} = \frac{1+A^2}{A^2 s^2}, \\
   \alpha_{42} = s^2, & \alpha_{53} = \frac{1+A^2}{s^2}, & \alpha_{63} = \frac{1+A^2}{c^2 s^2}.  &
   \end{array} \]

 We can now express $\zeta_{37}$, $\zeta_{47}$, and $\zeta_{57}$ in terms of the real parameters $c$, $s$ and $A$ as well as the remaining variables $\zeta_{27} := a_{27} + ib_{27}$ and $\zeta_{67} := a_{67} + ib_{67}$. To do this, we already have $x_{37} = x_{31}x_{27}-x_{63}+x_{67}$, $x_{57} = x_{27}+x_{67}-x_{26}$, and $x_{47} =x_{27}+x_{42}x_{67}-x_{26}+1$.
Similarly, we can express $\alpha_{37}$, $\alpha_{47}$, and $\alpha_{57}$ in terms of the same collection of variables.

 We now form an ideal in the ring $\CC[c,s,A,a_{27},b_{27},a_{67},b_{67}]$ from the relations $\alpha_{i7} = \abs{\zeta_{i7}}^2$ for $i \in \{ 3, 4, 5 \}$ as well the relation $c^2 + s^2 = 1$. From a Gr\"obner basis calculation in Sage\footnote{See \texttt{AdditionalCalculations4.ipynb} in \href{https://sites.google.com/view/jctorres}{https://sites.google.com/view/jctorres}}, we find that this ideal contains 1; that is, there are no solutions for the seven real parameters.
\end{proof}

There are four combinatorial 3-polytopes with exactly seven vertices including one of degree six. These are shown in Figure \ref{fig:v6}. All four turn out to have complex psd-minimal realizations. For each one we exhibit a complex matrix $M$ such that $S:=M \odot \overline{M}$ and  $M$ have rank 4, $S$ is a scaled slack matrix, and thus the pair $(S,M)$ witnesses its minimality\footnote{See \texttt{AdditionalCalculations5.ipynb} in \href{https://sites.google.com/view/jctorres}{https://sites.google.com/view/jctorres}}. In all four cases the first six rows, which index the facets incident to the degree six vertex, are as follows:
\[ \begin{bmatrix}
  0 & 0 & 0 & 1 & -i & \frac{1}{2}+\frac{1}{2}i & 1\\
0 & 1 & 0 & 0 & 1 & i & 2\\
0 & 2 & 1 & 0 & 0 & 1 & 2-2i\\
0 & 1 & \frac{1}{2}-\frac{1}{2}i & 1 & 0 & 0 & 1\\
0 & 1 & 1 & 2i & 1 & 0 & 0\\
0 & 0 & 1 & 2+2i & -2i & 1 & 0 \end{bmatrix}. \]
The remaining rows for each of the four polytopes are as follows:

\[ \begin{bmatrix}
  1 & 0 & 0 & 0 & 0 & 0 & 0
\end{bmatrix}, \:
\begin{bmatrix} 1 & 0 & -i & 2 & 0 & 0 & 0\\
  1 & 0 & 0 & 0 & 0 & 1 & -2i
\end{bmatrix},\]
\[ \begin{bmatrix}
1 & 0 & 0 & 1 & 0 & \frac{1}{2}+\frac{1}{2}i & 1\\
1 & -i & 0 & 0 & 0 & 1 & -2i\\
1 & -i & -i & 2 & 0 & 0 & 0\\
1 & 0 & \frac{1}{2} & 1+i & 0 & \frac{1}{2} & 0
\end{bmatrix}, \:
\begin{bmatrix}
1 & 0 & 0 & 0 & 2 & 2i & 2+2i\\
1 & -2i & -2i & 2-2i & 0 & 0 & 0\\
1 & 0 & 1 & 0 & 0 & 1 & 0
\end{bmatrix}.\]

The graphs of these polytopes, based on those available at Wolfram MathWorld \cite{Weisstein}, are shown in Figure \ref{fig:v6}. For their duals, see Chapter 5.1 of \cite{torres2020slack}.

\begin{figure}[ht]
\begin{center}
\begin{tikzpicture}[scale = 0.5]
\filldraw (0,0) node[above] {$\mathbf{v_1}$} circle (2pt);
\filldraw (0:2.5) node[right] {$\mathbf{v}_2$} circle (2pt);
\filldraw (60:2.5) node[right] {$\mathbf{v}_3$} circle (2pt);
\filldraw (120:2.5) node[left] {$\mathbf{v}_4$} circle (2pt);
\filldraw (180:2.5) node[left] {$\mathbf{v}_5$} circle (2pt);
\filldraw (240:2.5) node[left] {$\mathbf{v}_6$} circle (2pt);
\filldraw (300:2.5) node[right] {$\mathbf{v}_7$} circle (2pt);
\draw (0:2.5) -- (60:2.5) -- (120:2.5) -- (180:2.5) -- (240:2.5) -- (300:2.5) -- (0:2.5);
\draw (0,0) -- (0:2.5);
\draw (0,0) -- (60:2.5);
\draw (0,0) -- (120:2.5);
\draw (0,0) -- (180:2.5);
\draw (0,0) -- (240:2.5);
\draw (0,0) -- (300:2.5);
\end{tikzpicture}
\begin{tikzpicture}[scale = 0.5]
\filldraw (0,0) node[below] {$\mathbf{v}_2$} circle (2pt);
\filldraw (4,0) node[below] {$\mathbf{v}_3$} circle (2pt);
\filldraw (4,4) node[above] {$\mathbf{v}_4$} circle (2pt);
\filldraw (0,4) node[above] {$\mathbf{v}_5$} circle (2pt);
\draw (0,0) -- (0,4) -- (4,4) -- (4,0) -- (0,0);
\filldraw (3,2) node[right] {$\mathbf{v_1}$} circle (2pt);
\filldraw (1,3) node[left] {$\mathbf{v}_6$} circle (2pt);
\filldraw (1,1) node[left] {$\mathbf{v}_7$} circle (2pt);
\draw (0,0) -- (3,2) -- (1,1) -- (0,0);
\draw (0,4) -- (3,2) -- (1,3) -- (0,4);
\draw (1,1) -- (1,3);
\draw (4,0) -- (3,2) -- (4,4);
\end{tikzpicture}
\begin{tikzpicture}[scale = 0.5]
\filldraw (0,0) node[below] {$\mathbf{v}_2$} circle (2pt);
\filldraw (5,0) node[below] {$\mathbf{v}_5$} circle (2pt);
\filldraw (60:5) node[above] {$\mathbf{v}_7$} circle (2pt);
\draw (0,0) -- (5,0) -- (60:5) -- (0,0);
\filldraw (2,1) node[below] {$\mathbf{v}_3$} circle (2pt);
\filldraw (2,2) node[left] {$\mathbf{v_1}$} circle (2pt);
\filldraw (2.5,1) node[right] {$\mathbf{v}_4$} circle (2pt);
\filldraw (3,2) node[right] {$\mathbf{v}_6$} circle (2pt);
\draw (0,0) -- (2,1) -- (2,2) -- (0,0);
\draw (2,1) -- (2.5,1) -- (5,0) -- (2,1);
\draw (2.5,1) -- (2,2) -- (5,0) -- (3,2) -- (2,2) -- (60:5) -- (3,2);
\end{tikzpicture}
\begin{tikzpicture}[scale = 0.5]
\filldraw (0,0) node[below] {$\mathbf{v}_2$} circle (2pt);
\filldraw (4,0) node[below] {$\mathbf{v}_4$} circle (2pt);
\filldraw (4,4) node[above] {$\mathbf{v}_5$} circle (2pt);
\filldraw (0,4) node[above] {$\mathbf{v}_7$} circle (2pt);
\draw (0,0) -- (0,4) -- (4,4) -- (4,0) -- (0,0);
\filldraw (2,1) node[below] {$\mathbf{v}_3$} circle (2pt);
\filldraw (2,2) node[right] {$\mathbf{v_1}$} circle (2pt);
\filldraw (2,3) node[above] {$\mathbf{v}_6$} circle (2pt);
\draw (0,0) -- (2,1) -- (4,0);
\draw (2,1) -- (2,2) -- (2,3);
\draw (0,4) -- (2,3) -- (4,4);
\draw (0,0) -- (2,2) -- (4,4);
\draw (0,4) -- (2,2) -- (4,0);
\end{tikzpicture}
\end{center}
\caption{Polyhedral graphs with seven vertices including one of degree six and a complex psd-minimal realization\label{fig:v6}}
  \end{figure}
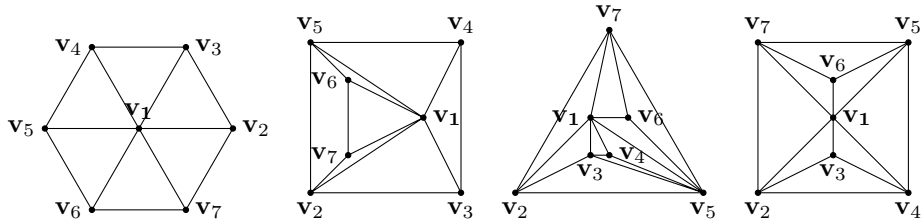

\subsection{Doubly 3/4 polytopes}
 From Proposition \ref{prop:346} and \ref{prop:deg6v}, it follows that the only 3-polytopes that are candidates for complex psd-minimality, other than the four shown in Figure \ref{fig:v6}, are those that satisfy the condition

 (*) $P$ is a 3-polytope whose facets are all triangles or quadrilaterals and whose vertices are all of degree three or four,

which we will call \emph{doubly 3/4 polytopes}. We summarize how many such polytopes exist up to 13 vertices and what we know about complex psd-minimality.

\textbf{4 and 5 vertices}: Any 3-polytope with 4 or 5 vertices is doubly 3/4. All of these (that is, all tetrahedra, quadrilateral bipyramids, and bisimplices) are psd-minimal, hence also complex psd-minimal, by the general result for $d$-polytopes with at most $d+2$ vertices.

\textbf{6 vertices}: There are seven combinatorial 3-polytopes with six vertices, of which four are doubly 3/4. Their graphs are shown in Figure \ref{fig:hex} (see Chapter 5.2 of \cite{torres2020slack} for their duals). All four possess complex psd-minimal realizations for the following reasons. The first is the octahedron and the second is the triangular prism, which are known to be real psd-minimal. For the others, it will suffice to exhibit a scaled slack matrix $S$ and a complex matrix $M$, both of rank four such that $S=M\odot\overline{M}$. Such matrices $M$\footnote{See \texttt{AdditionalCalculations5.ipynb} in \href{https://sites.google.com/view/jctorres}{https://sites.google.com/view/jctorres}} for the third and fourth of these polytopes are given respectively by

\[ M_3=\begin{bmatrix}
0 & 0 & 1 & 1 & 0 & 1\\
1 & 0 & 1 & 1-i & 0 & 0\\
i & 0 & 0 & 1 & 1 & 0\\
i & 1 & 0 & 0 & 1 & 0\\
1 & 1-i & 1 & 0 & 0 & 0\\
0 & 1 & 1 & 0 & 0 & 1\\
0 & 0 & 0 & 0 & -i & 1
\end{bmatrix}, \; M_4 = \begin{bmatrix}
0 & 1 & 1 & 0 & 0 & 0\\
0 & 0 & 1 & 1 & 1 & 0\\
1 & 1 & 0 & 0 & 0 & 1\\
0 & 0 & 0 & 0 & \frac{1}{2}+\frac{1}{2}i & 1\\
1 & 0 & 0 & 1 & \frac{1}{2}-\frac{1}{2}i & 0\\
1+i & 1 & 0 & i & 0 & 0
\end{bmatrix}.\]

\begin{figure}[ht]
\begin{center}
\begin{tikzpicture}[scale = 0.5]
\filldraw (90:1.5) node[right] {$\mathbf{v}_4$} circle (2pt);
\filldraw (210:1.5) node[left] {$\mathbf{v}_5$} circle (2pt);
\filldraw (330:1.5) node[right] {$\mathbf{v}_6$} circle (2pt);
\filldraw (90:3) node[above] {$\mathbf{v}_3$} circle (2pt);
\filldraw (210:3) node[below] {$\mathbf{v}_1$} circle (2pt);
\filldraw (330:3) node[below] {$\mathbf{v}_2$} circle (2pt);
\draw (210:3) -- (330:3) -- (90:3) -- (210:3);
\draw (210:1.5) -- (330:1.5) -- (90:1.5) -- (210:1.5);
\draw (90:3) -- (90:1.5);
\draw (210:3) -- (210:1.5);
\draw (330:3) -- (330:1.5);
\end{tikzpicture}
\begin{tikzpicture}[scale = 0.5]
\filldraw (30:1) node[below] {$\mathbf{v}_6$} circle (2pt);
\filldraw (150:1) node[below] {$\mathbf{v}_5$} circle (2pt);
\filldraw (270:1) node[below] {$\mathbf{v}_2$} circle (2pt);
\filldraw (90:3) node[above] {$\mathbf{v}_4$} circle (2pt);
\filldraw (210:3) node[below] {$\mathbf{v}_1$} circle (2pt);
\filldraw (330:3) node[below] {$\mathbf{v}_3$} circle (2pt);
\draw (210:3) -- (330:3) -- (90:3) -- (210:3);
\draw (210:3) -- (150:1) -- (270:1) -- (210:3);
\draw (330:3) -- (270:1) -- (30:1) -- (330:3);
\draw (90:3) -- (150:1) -- (30:1) -- (90:3);
\end{tikzpicture}
\begin{tikzpicture}[scale = 0.5]
\filldraw (30:1) node[right] {$\mathbf{v}_6$} circle (2pt);
\filldraw (150:1) node[left] {$\mathbf{v}_5$} circle (2pt);
\filldraw (270:1.5) node[below] {$\mathbf{v}_2$} circle (2pt);
\filldraw (90:3) node[above] {$\mathbf{v}_4$} circle (2pt);
\filldraw (210:3) node[below] {$\mathbf{v}_1$} circle (2pt);
\filldraw (330:3) node[below] {$\mathbf{v}_3$} circle (2pt);
\draw (210:3) -- (330:3) -- (90:3) -- (210:3);
\draw (210:3) -- (150:1) -- (270:1.5);
\draw (330:3) -- (30:1) -- (270:1.5);
\draw (90:3) -- (150:1) -- (30:1) -- (90:3);
\end{tikzpicture}
\begin{tikzpicture}[scale = 0.5]
\filldraw (0,0) node[below] {$\mathbf{v}_1$} circle (2pt);
\filldraw (0,4) node[above] {$\mathbf{v}_2$} circle (2pt);
\filldraw (4,4) node[above] {$\mathbf{v}_3$} circle (2pt);
\filldraw (4,0) node[below] {$\mathbf{v}_4$} circle (2pt);
\draw (0,0) -- (0,4) -- (4,4) -- (4,0) -- (0,0);
\filldraw (3,2) node[right] {$\mathbf{v}_5$} circle (2pt);
\filldraw (1,2) node[left] {$\mathbf{v}_6$} circle (2pt);
\draw (4,0) -- (3,2) -- (1,2) -- (0,0);
\draw (0,4) -- (1,2) -- (4,4) -- (3,2);
\end{tikzpicture}
\end{center}
\caption{Polyhedral graphs with six vertices and a complex psd-minimal realization\label{fig:hex}}
\end{figure}
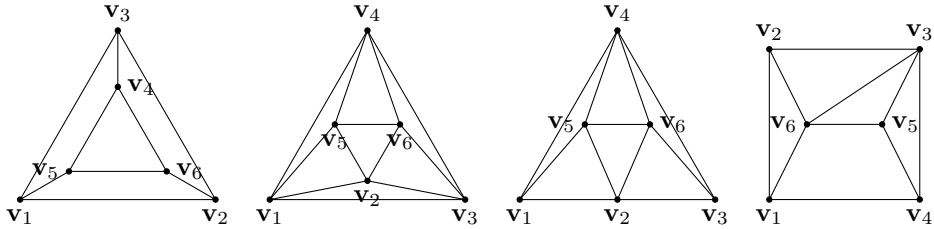

\textbf{7 vertices:} There are 34 combinatorial 3-polytopes with seven vertices, of which seven are doubly 3/4. One of these, which is self-dual and is shown in Figure~\ref{fig:7obstructed}, can be ruled out using trinomials. Specifically, a scaled slack matrix is
\[S=\begin{bmatrix}
0 & 0 & 0 & 1 & 1 & x_{16} & 1\\
0 & 1 & 0 & 0 & 1 & x_{26} & x_{27}\\
0 & 1 & 1 & 0 & 0 & 0 & x_{37}\\
0 & 0 & 1 & 1 & 0 & 1 & 0\\
1 & 1 & 0 & 0 & x_{55} & 0 & x_{57}\\
1 & 0 & 0 & x_{64} & x_{65} & 0 & 0\\
1 & x_{72} & x_{73} & x_{74} & 0 & 0 & 0
\end{bmatrix}\]
where we use that rows 1 through 4 and columns 2 through 5 represent the slack matrix of a quadrilateral to force an extra 1 in position $(1,5)$ as in Example \ref{ex:squareslack}.

\begin{figure}[ht]
  \begin{center}
    \begin{tikzpicture}[scale=0.5]
\filldraw (0,0) node[below] {$\mathbf{v}_1$} circle (2pt);
\filldraw (4,0) node[below] {$\mathbf{v}_2$} circle (2pt);
\filldraw (4,4) node[above] {$\mathbf{v}_7$} circle (2pt);
\filldraw (0,4) node[above] {$\mathbf{v}_5$} circle (2pt);
\draw (0,0) -- (0,4) -- (4,4) -- (4,0) -- (0,0);
\filldraw (1,1) node[below] {$\mathbf{v}_4$} circle (2pt);
\filldraw (3,1) node[below] {$\mathbf{v}_3$} circle (2pt);
\filldraw (2,3) node[above] {$\mathbf{v}_6$} circle (2pt);
\draw (0,0) -- (3,1) -- (2,3) -- (1,1);
\draw (0,0) -- (1,1) -- (3,1) -- (4,0) -- (3,1);
\draw (0,4) -- (2,3) -- (4,4);
    \end{tikzpicture}
  \end{center}
  \caption{A polyhedral graph with seven vertices with no complex psd-minimal realizations\label{fig:7obstructed}}
  \end{figure}
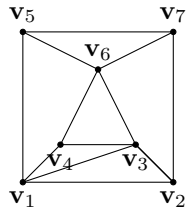

We can check computationally that the ideal of 5-minors of $S$ contains the trinomials\footnote{See \texttt{AdditionalCalculations6.ipynb} in \href{https://sites.google.com/view/jctorres}{https://sites.google.com/view/jctorres}}
\[ x_{37}-x_{27}+1, \; x_{16}x_{37}-x_{16}x_{57}+1, \;  x_{26}-x_{16}+1.\]
Since $\frac{x_{16}x_{37}}{x_{37}} = x_{16}$, this combinatorial polytope has no complex psd-minimal realizations by Proposition \ref{prop:incompatible}.

The remaining six combinatorial polytopes are shown in Figure \ref{fig-hep} (see Chapter 5.3 of \cite{torres2020slack} for their duals) and have all complex psd-minimal realizations\footnote{See \texttt{AdditionalCalculations5.ipynb} in \href{https://sites.google.com/view/jctorres}{https://sites.google.com/view/jctorres}}.
\begin{figure}[h]
\begin{subfigure}{1\textwidth}
\centering
\begin{tikzpicture}[scale = 0.6]
\filldraw (0,0) node[below] {$\mathbf{v}_1$} circle (2pt);
\filldraw (4,0) node[below] {$\mathbf{v}_2$} circle (2pt);
\filldraw (4,4) node[above] {$\mathbf{v}_3$} circle (2pt);
\filldraw (0,4) node[above] {$\mathbf{v}_4$} circle (2pt);
\draw (0,0) -- (0,4) -- (4,4) -- (4,0) -- (0,0);
\filldraw (3,2) node[right] {$\mathbf{v}_7$} circle (2pt);
\filldraw (1,3) node[above] {$\mathbf{v}_5$} circle (2pt);
\filldraw (1,1) node[below] {$\mathbf{v}_6$} circle (2pt);
\draw (0,0) -- (1,1);
\draw (1,1) -- (3,2) -- (1,3) -- (0,4);
\draw (1,1) -- (1,3);
\draw (4,0) -- (3,2) -- (4,4);
\end{tikzpicture}
\begin{tikzpicture}[scale = 0.6]
\filldraw (0,0) node[below] {$\mathbf{v}_1$} circle (2pt);
\filldraw (4,0) node[below] {$\mathbf{v}_2$}  circle (2pt);
\filldraw (4,4) node[above] {$\mathbf{v}_7$} circle (2pt);
\filldraw (0,4) node[above] {$\mathbf{v}_5$} circle (2pt);
\draw (0,0) -- (0,4) -- (4,4) -- (4,0) -- (0,0);
\filldraw (1,2) node[left] {$\mathbf{v}_4$} circle (2pt);
\filldraw (2,2) node[below] {$\mathbf{v}_3$} circle (2pt);
\filldraw (3,2) node[right] {$\mathbf{v}_6$} circle (2pt);
\draw (1,2) -- (2,2) -- (3,2) -- (4,4) -- (0,0) -- (1,2) -- (0,4);
\draw (3,2) -- (4,0);
\end{tikzpicture}
\begin{tikzpicture}[scale = 0.6]
\filldraw (0,0) node[below] {$\mathbf{v}_5$} circle (2pt);
\filldraw (4,0) node[below] {$\mathbf{v}_6$} circle (2pt);
\filldraw (4,4) node[above] {$\mathbf{v}_7$} circle (2pt);
\filldraw (0,4) node[above] {$\mathbf{v}_4$} circle (2pt);
\filldraw (2,2.5) node[above] {$\mathbf{v}_3$} circle (2pt);
\draw (0,0) -- (0,4) -- (4,4) -- (4,0) -- (0,0);
\filldraw (1,1) node[below] {$\mathbf{v}_1$} circle (2pt);
\filldraw (3,1) node[below] {$\mathbf{v}_2$} circle (2pt);
\draw (1,1) -- (3,1) -- (2,2.5) -- (0,4) -- (1,1) -- (2,2.5) -- (4,4);
\draw (0,0) -- (1,1);
\draw (4,0) -- (3,1);
\draw (3,1) -- (4,4);
\end{tikzpicture}
\begin{tikzpicture}[scale = 0.6]
\filldraw (0,0) node[below] {$\mathbf{v}_2$} circle (2pt);
\filldraw (4,0) node[below] {$\mathbf{v}_7$} circle (2pt);
\filldraw (4,4) node[above] {$\mathbf{v}_5$} circle (2pt);
\filldraw (0,4) node[above] {$\mathbf{v}_1$} circle (2pt);
\draw (0,0) -- (0,4) -- (4,4) -- (4,0) -- (0,0);
\filldraw (1,1) node[left] {$\mathbf{v}_3$} circle (2pt);
\filldraw (3,3) node[above] {$\mathbf{v}_4$} circle (2pt);
\filldraw (3,1) node[right] {$\mathbf{v}_6$} circle (2pt);
\draw (1,1) -- (3,1) -- (3,3) -- (0,4) -- (1,1) -- (4,0);
\draw (0,0) -- (1,1);
\draw (4,0) -- (3,1);
\draw (4,4) -- (3,3);
\draw (3,1) -- (4,4);
\end{tikzpicture}
\end{subfigure}
\begin{subfigure}{1\textwidth}
\centering
\begin{tikzpicture}[scale = 0.6]
\filldraw (0,0) node[below] {$\mathbf{v}_7$} circle (2pt);
\filldraw (90:1.5) node[right] {$\mathbf{v}_6$} circle (2pt);
\filldraw (210:1.5) node[below] {$\mathbf{v}_4$} circle (2pt);
\filldraw (330:1.5) node[below] {$\mathbf{v}_5$} circle (2pt);
\filldraw (90:3) node[above] {$\mathbf{v}_3$} circle (2pt);
\filldraw (210:3) node[below] {$\mathbf{v}_1$} circle (2pt);
\filldraw (330:3) node[below] {$\mathbf{v}_2$} circle (2pt);
\draw (210:3) -- (330:3) -- (90:3) -- (210:3);
\draw (210:1.5) -- (330:1.5) -- (90:1.5) -- (210:1.5);
\draw (210:3) -- (0,0) -- (330:3);
\draw (0,0) -- (90:3);
\end{tikzpicture}
\begin{tikzpicture}[scale = 0.6]
\filldraw (0,0) node[above] {$\mathbf{v}_7$} circle (2pt);
\filldraw (90:3) node[above] {$\mathbf{v}_3$} circle (2pt);
\filldraw (210:3) node[below] {$\mathbf{v}_1$} circle (2pt);
\filldraw (330:3) node[below] {$\mathbf{v}_2$} circle (2pt);
\filldraw (30:0.75) node {} circle(2pt);
\filldraw (150:0.75) node {} circle(2pt);
\filldraw (270:0.75) node {} circle(2pt);
\filldraw (30:1.1) node {$\mathbf{v}_6$};
\filldraw (150:1.1) node {$\mathbf{v}_4$};
\filldraw (270:1.1) node {$\mathbf{v}_5$};
\draw (210:3) -- (330:3) -- (90:3) -- (210:3);
\draw (210:3) -- (270:0.75) -- (0,0) -- (150:0.75) -- (210:3);
\draw (90:3) -- (150:0.75);
\draw (90:3) -- (30:0.75) -- (0,0);
\draw (270:0.75) -- (330:3) -- (30:0.75);
\end{tikzpicture}
\end{subfigure}
\caption{Polyhedral graphs with seven vertices, none of degree 6, and a complex psd-minimal realization\label{fig-hep}}
\end{figure}
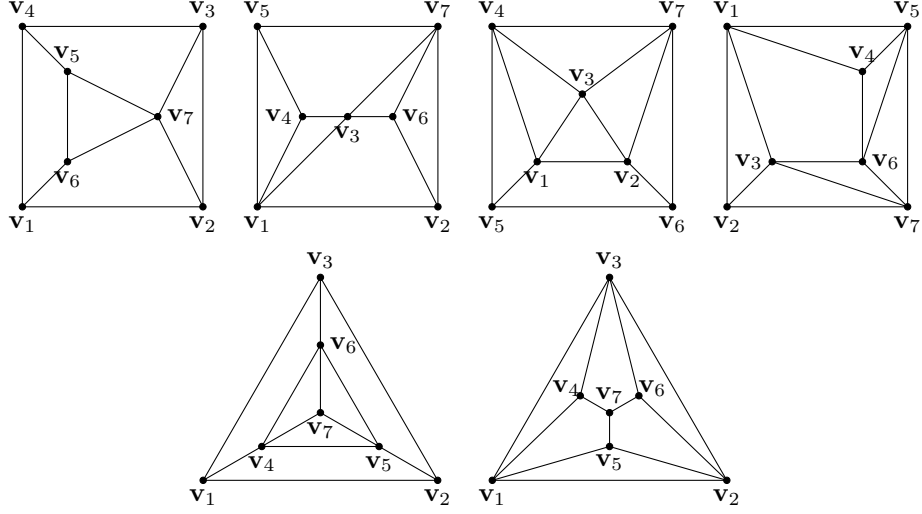

\textbf{8 or more vertices} We use the software package Plantri \cite{Plantri} to enumerate the graphs of doubly 3/4 polytopes with 8, 9, 10, 11, 12, and 13 vertices\footnote{Lists of these polytopes are available in \href{https://sites.google.com/view/jctorres}{https://sites.google.com/view/jctorres}}. We compare with the number for which complex psd-minimality is ruled out by our obstruction, using the trinomials and binomials from Proposition \ref{prop:3d-trinomials} and Proposition \ref{prop:3d-binomials}, and with the total number of 3-polytopes with the corresponding number of vertices (see sequence A000944 in the On-Line Encyclopedia of Integer Sequences \cite{Sloane}) in Table \ref{table:count-polytopes}.

\begin{table}[h]
\centering
\begin{tabular}{l l | l | l | l | l | l | l}
\# of vertices & 8 & 9 & 10 & 11 & 12 & 13 \\
\hline
\# of 3-polytopes & 257 & 2606 & 32300 & 440564 & 6384634 & 96262938 \\
\# of doubly 3/4 polytopes & 18 & 28 & 56 & 78 & 138 & 196  \\
\# ruled out & 2 & 4 & 6 & 6 & 7  & 9
\end{tabular}
\caption{Number of doubly 3/4 3-polytopes and those for which complex psd-minimality is ruled out by binomials and trinomials}
\label{table:count-polytopes}
\end{table}

\section{Conclusions and Questions}
We completely characterize complex psd-minimal polygons, including precisely which realizations of the hexagon, and rule out complex psd-minimality for the overwhelming majority of 3-polytopes by the limits on vertex and facet degrees. However, our techniques are not sufficient for a complete characterization in dimension three.

Both the complex psd-minimality of Pappus hexagons and the various examples in dimension three suggest that there are many more complex than real psd-minimal polytopes. In particular, it has been conjectured \cite[Conjecture 23]{bohn2019enumeration} that for every 2-level polytope $P$, the number of vertices $f_0(P)$ and the number of facets $f_{d-1}(P)$ satisfy
\[ f_0(P)f_{d-1}(P) \leq d2^{d+1}.\]
This conjecture is open not only for 2-level polytopes but also for the larger class of real psd-minimal polytopes. However our example of the hexagon shows it cannot extend to the complex psd-minimal case, since the left side of the inequality would be 36 and the right side 16.

It is known that there are only finitely many (combinatorial) complex psd-minimal polytopes in each dimension; this follows from \cite[Corollary 4.18]{GPT13} along with the fact that the complex psd-rank is at most twice the real psd-rank. However, the bound on the number of vertices of a complex psd-minimal 3-polytope obtained from this result would be huge.

\begin{problem}
What is the maximum number of vertices (or of vertices times facets) of a complex psd-minimal $d$-polytope for each $d \geq 3$?
\end{problem}

Although it follows from our characterization of trinomial minors that a doubly 3/4 polytope with many vertices and facets also has many such minors, the experiments suggest that these trinomials are often not enough to generate local obstructions to complex psd-minimality. However, we have seen that there are often many trinomials in the ideal of 5-minors that are not themselves minors.
\begin{problem}
Characterize classes of trinomials in the slack ideal which are not minors.
\end{problem}
Such trinomials could play an important role not only in our complex psd-minimality calculations but also in providing non-realizability  certificates. Finally, it is likely that local obstructions via trinomials will prove to be insufficient for a complete characterization of complex psd-minimality, so other ideas will need to be developed.
\begin{problem}
Is there a more global type of systematic obstruction to complex psd-minimality?
\end{problem}

\bibliographystyle{alpha}
\bibliography{trinomials}

\newcommand{\etalchar}[1]{$^{#1}$}
\begin{thebibliography}{GMTW20}

\bibitem[ACF18]{ACF18}
Manuel Aprile, Alfonso Cevallos, and Yuri Faenza.
\newblock On 2-level polytopes arising in combinatorial settings.
\newblock {\em SIAM Journal on Discrete Mathematics}, 32(3):1857--1886, 2018.

\bibitem[BFF{\etalchar{+}}19]{bohn2019enumeration}
Adam Bohn, Yuri Faenza, Samuel Fiorini, Vissarion Fisikopoulos, Marco Macchia,
  and Kanstantsin Pashkovich.
\newblock Enumeration of 2-level polytopes.
\newblock {\em Mathematical Programming Computation}, 11(1):173--210, 2019.

\bibitem[BM07]{Plantri}
Gunnar Brinkmann and Brendan~D McKay.
\newblock Fast generation of planar graphs.
\newblock {\em MATCH Communications in Mathematical and in Computer Chemistry},
  58(2):323--357, 2007.

\bibitem[BM13]{braverman2013information}
Mark Braverman and Ankur Moitra.
\newblock An information complexity approach to extended formulations.
\newblock In {\em Proceedings of the Forty-fifth Annual ACM Symposium on Theory
  of Computing}, pages 161--170, 2013.

\bibitem[BP16]{braun2016common}
G{\'a}bor Braun and Sebastian Pokutta.
\newblock Common information and unique disjointness.
\newblock {\em Algorithmica}, 76(3):597--629, 2016.

\bibitem[BR90]{BOKOWSKI199021}
Jürgen Bokowski and Jürgen Richter.
\newblock On the finding of final polynomials.
\newblock {\em European Journal of Combinatorics}, 11(1):21--34, 1990.

\bibitem[DSJ{\etalchar{+}}20]{sagemath}
The~Sage Developers, William Stein, David Joyner, David Kohel, John Cremona,
  and Burçin Eröcal.
\newblock Sagemath, version 9.0, 2020.

\bibitem[FGP{\etalchar{+}}15]{fawzi2015positive}
Hamza Fawzi, Jo{\~a}o Gouveia, Pablo~A Parrilo, Richard~Z Robinson, and Rekha~R
  Thomas.
\newblock Positive semidefinite rank.
\newblock {\em Mathematical Programming}, 153(1):133--177, 2015.

\bibitem[FMP{\etalchar{+}}15]{fiorini2015exponential}
Samuel Fiorini, Serge Massar, Sebastian Pokutta, Hans~Raj Tiwary, and Ronald
  De~Wolf.
\newblock Exponential lower bounds for polytopes in combinatorial optimization.
\newblock {\em Journal of the ACM}, 62(2):1--23, 2015.

\bibitem[GG21]{GGphaseless}
Ant{\'o}nio~Pedro Goucha and Jo{\~a}o Gouveia.
\newblock The phaseless rank of a matrix.
\newblock {\em SIAM Journal on Applied Algebra and Geometry}, 5(3):526--551,
  2021.

\bibitem[GGK{\etalchar{+}}13]{gouveia2013nonnegative}
Jo{\~a}o Gouveia, Roland Grappe, Volker Kaibel, Kanstantsin Pashkovich,
  Richard~Z Robinson, and Rekha~R Thomas.
\newblock Which nonnegative matrices are slack matrices?
\newblock {\em Linear Algebra and its Applications}, 439(10):2921--2933, 2013.

\bibitem[GGS17]{GGS17}
Ant{\'o}nio~Pedro Goucha, Jo{\~a}o Gouveia, and Pedro~M Silva.
\newblock On ranks of regular polygons.
\newblock {\em SIAM Journal on Discrete Mathematics}, 31(4):2612--2625, 2017.

\bibitem[GMTW20]{GMTW18}
Jo{\~a}o Gouveia, Antonio Macchia, Rekha~R Thomas, and Amy Wiebe.
\newblock Projectively unique polytopes and toric slack ideals.
\newblock {\em Journal of Pure and Applied Algebra}, 224(5):106229, 2020.

\bibitem[GPRT17]{GPRT17}
Jo{\~a}o Gouveia, Kanstanstin Pashkovich, Richard~Z Robinson, and Rekha~R
  Thomas.
\newblock Four-dimensional polytopes of minimum positive semidefinite rank.
\newblock {\em Journal of Combinatorial Theory, Series A}, 145:184--226, 2017.

\bibitem[GPT13]{GPT13}
Jo{\~a}o Gouveia, Pablo~A Parrilo, and Rekha~R Thomas.
\newblock Lifts of convex sets and cone factorizations.
\newblock {\em Mathematics of Operations Research}, 38(2):248--264, 2013.

\bibitem[GRT13]{GRT13}
Jo{\~a}o Gouveia, Richard~Z Robinson, and Rekha~R Thomas.
\newblock Polytopes of minimum positive semidefinite rank.
\newblock {\em Discrete \& Computational Geometry}, 50(3):679--699, 2013.

\bibitem[LRS15]{lee-raghavendra-steurer}
James~R Lee, Prasad Raghavendra, and David Steurer.
\newblock Lower bounds on the size of semidefinite programming relaxations.
\newblock In {\em Proceedings of the Forty-seventh Annual ACM Symposium on
  Theory of Computing}, pages 567--576, 2015.

\bibitem[LWdW17]{LWdW17}
Troy Lee, Zhaohui Wei, and Ronald de~Wolf.
\newblock Some upper and lower bounds on psd-rank.
\newblock {\em Mathematical programming}, 162(1-2):495--521, 2017.

\bibitem[Pad16]{P16}
Arnau Padrol.
\newblock Extension complexity of polytopes with few vertices or facets.
\newblock {\em SIAM Journal on Discrete Mathematics}, 30(4):2162--2176, 2016.

\bibitem[Rot17]{rothvoss2017}
Thomas Rothvo{\ss}.
\newblock The matching polytope has exponential extension complexity.
\newblock {\em Journal of the ACM}, 64(6):1--19, 2017.

\bibitem[S{\etalchar{+}}03]{Sloane}
Neil~JA Sloane et~al.
\newblock The on-line encyclopedia of integer sequences, 2003.
\newblock Published electronically at https://oeis.org.

\bibitem[TC20]{torres2020slack}
Juan~Camilo Torres~Chaves.
\newblock {\em The slack model in the study of polytopes}.
\newblock PhD thesis, Universidad de los Andes, 2020.

\bibitem[Wei02]{Weisstein}
Eric~W Weisstein.
\newblock Polyhedral graph, 2002.
\newblock Published electronically at https://mathworld. wolfram. com.

\bibitem[Yan91]{yannakakis1991expressing}
Mihalis Yannakakis.
\newblock Expressing combinatorial optimization problems by linear programs.
\newblock {\em Journal of Computer and System Sciences}, 43(3):441--466, 1991.

\end{thebibliography}
\end{document}